\documentclass[12pt,a4paper]{article}

\usepackage{amsmath}
\usepackage{amssymb}
\usepackage{amsxtra}
\usepackage{latexsym}
\usepackage{color}

\newtheorem{tm}{Theorem}

\newtheorem{cor}{Corollary}

\newtheorem{st}{Statement}
\newtheorem{ex}{Example}

\newtheorem{rem}{Remark}

\let\<\langle
\let\>\rangle

\numberwithin{equation}{section}  

\begin{document}

\title{On the traces of the $L_2$-solution of a general linear differential equation in the domain}

\author{
Vladimir P. Burskii
\footnotemark $^1$
}

\date{}

\maketitle

\begin{abstract}

This paper pertains to the general theory of boundary value problems for general linear differential equations with smooth coefficients in a bounded domain with a smooth boundary and contains new advances in the general theory related to the boundary properties of solutions. Specifically, conditions on the traces of a solution to a general differential equation on the boundary of the domain are found and studied, allowing the solution to be reconstructed from its traces and the right-hand side of the equation. For the case of a general equation with constant coefficients, the resulting conditions on the traces of the solution take the form of a generalized moment problem.

Bibliography: 22 titles.

{\bf Keywords:} Partial differential equation, traces of solutions, boundary value problems, general theory.

{\bf MSC:}
35G15, 
35D99, 
35E20 
\end{abstract}

\section{Introduction}

\footnotetext[1]{Moscow Institute of Physics and Technology, Institute of Applied Mathematics and Mechanics, {\tt burskii.vp@phystech.edu}}

It is well known that if some function $u\in H^l(\Omega)$ ($H^l(\Omega)=W_2^l(\Omega)$ is a Sobolev space) is a solution to a differential equation in a domain $\Omega\subset \Bbb R^n$ with smooth boundary $\partial\Omega$, then its boundary values or traces on the boundary are somehow related. Here, for an equation of order $l$, by the traces of a solution $u$ it is natural to understand a set of $l$ functions $u|_{\partial\Omega}$, $\partial_\nu u|_{\partial\Omega}$,..., $\partial^{(l-1)}_\nu u|_{\partial\Omega},$ where $\nu$ is the vector field of the outward normal in the neighborhood of the boundary.
For example, for the Poisson equation $\Delta u=f$, the functions $u|_{\partial\Omega}$ and $\partial_\nu u|_{\partial\Omega}$ cannot both be arbitrary, and specifying, for example, the first of them determines the second through the solution of the Dirichlet problem.
For the Poisson equation, the relationship between the solution traces is usually written as a Fredholm integral equation of the second kind, arising from representing the solution as a sum of three potentials. In this paper, from the perspective of the general theory of boundary value problems for partial differential equations, we consider the relationship between the solution traces of a general linear partial differential equation with $C^\infty$-smooth coefficients in a bounded domain $\Omega\subset \Bbb R^n$ with a $C^\infty$-smooth boundary $\partial\Omega$.

The fundamental idea behind the development of the general theory of boundary value problems for differential equations was John von Neumann's idea that a homogeneous boundary value problem can be understood as specifying the domain of some extension of a differential operator, initially defined on smooth compactly supported functions and acting on subspaces of the central space $L_2(\Omega)$.
The study of extensions of ordinary differential operators, as well as elliptic operators, led to the construction of a far-reaching and diverse theory with a large number of applications.
This same idea by J. von Neumann was used as the foundation for the general theory of boundary value problems for general partial differential equations in the work of M.Y. Vishik, where the boundary value problem is understood as specifying the domain of some extension of a minimal operator. Then, in L. H\"ormander's work, H\"ormander refined the concept of a boundary value problem, along with a proof of M.Y. Vishik's conditions for the existence of a well-posed boundary value problem for a scalar differential operation with constant coefficients.
At the same time, Ya.B. Lopatinsky ~\cite{Lop} found a condition for the Fredholm property of a general differential boundary value problem for an elliptic equation or system.
After the explosion of general interest in this topic in the 1960s and the recognition of the difficulties associated with the lack of serious research progress, analysts' interest in this area experienced a long decline. Among the works of the 1960s, we note the research of M.S. Agranovich \cite{Agr}, Yu.M. Berezanskii (see \cite{Ber}, pp. 93-115) and later A.A. Dezin \cite{Dez} on smoothly generated general boundary value problems. At present, the study of general formulations of boundary value problems is proceeding in the directions set in the mid-20th century by I.G. Petrovsky \cite{Pet}, when the very formulation of the boundary value problem was linked to the type of a specific differential equation, and among such formulations, a pressing issue is the selection of correct formulations of boundary value problems (see, for example, the book by A.V. Bitsadze \cite{Biz}, the monograph by A.P. Soldatov, beginning in \cite{Sold}). The author's book \cite{Bur1} is devoted to the development of a general theory of boundary value problems for general differential equations and systems in the domain. For a discussion of the representation of an equation's solution through a sum of potentials, see also the work \cite{Volov}. This work contains new advances in the general theory of boundary value problems related to the boundary properties of solutions. In particular, the behavior of the solution of a general differential equation on the boundary of a domain
is studied.

We obtain conditions on the solution's traces that allow this solution to be uniquely reconstructed from its traces and the right-hand side of the equation. For the case of a general equation with constant coefficients, the resulting conditions relating the solution's traces take the form of a generalized moment problem.

\section{Constructions of the general theory of boundary value problems} \label{S1}

Let $ \mathcal L =\mathcal L(x,D)= \sum\limits_{|\alpha |\leq l} a_\alpha (x) D^\alpha $ be a general differential operation, where $a_\alpha \in C^{\infty}(\bar\Omega)$ are complex-valued functions,
$ D^\alpha=\frac{(-i\partial)^{|\alpha|}}{\partial x^\alpha},$ and let $\Omega$ be an arbitrary bounded domain in $\Bbb R^n$. The operation $\mathcal L$ generates the formally adjoint operation
$\mathcal L^+ = \sum\limits_{|\alpha |\leq l} D^\alpha (\overline{a_\alpha (x)}\ \cdot )$, where $\overline{a_\alpha (x)}$ is the complex conjugate function.
Following the usual constructions of the general theory of boundary value problems, in the first subsections of this paper we will conduct our considerations in the space $H:=L_2(\Omega)$. This space will also be used as a central space in the sense of rigged spaces. Below we will use the standard notations $H^l(\Omega),$ $H_0^l(\Omega)$ for Sobolev spaces.

\quad {\bf Minimal operator} $L_0$, defined as the closure of the operator $\mathcal L$,
originally defined on $C_0^\infty(\Omega)$, in the graph norm
$\Vert u\Vert^2_L=\Vert u\Vert^2_H+ \Vert \mathcal L u\Vert^2_H$, generates a {\bf maximal operator} $L=(L_0)^*$ via the conjugation * in the Hilbert space $H$. The domains $D(L_0),$ $ D(L)$ of these operators are Hilbert spaces with the corresponding graph norm, and the subspaces $D(L_0)\subset D(L)$ and $D(L^+_0)\subset D(L^+)$ are closed.
We introduce the {\bf boundary space} $C(L)=D(L)/D(L_0)$ for the operator $L$, as well as the factor mapping $\Gamma:D(L)\to C(L)$. For the maximal operator $L$, we have a short exact sequence (\cite{Mac}, p. 22)
$$0 \to \ker L \to D(L) \to \text{Im}\,L \to 0 $$
and Im$\,L$ is closed in $H$. There is a similar sequence for the minimal operator, and, in addition, there are exact sequences
for the factorizations $D(L)/D(L_0)=C(L)$ and $\text{Im}\,L/\text{Im}\,L_0$.
Putting this together, we obtain the diagram

\begin{tabular}{ccccccccll}\label{D1}

 &     &0\ \ \ \ \ \          &                                     &           0\ \ \ \ &                                 &  0                                  &     & &     \\
 &     &$\downarrow\ \ \ \ \ $&                                     &$\downarrow$\ \ \ \ &                                 &$\downarrow $                        &     & &     \\
0&$\to$&$\ker L_0\ \ $        &$\stackrel{i_{L_0}}{\longrightarrow}$&$D(L_0)$            &$\stackrel{L_0}{\longrightarrow}$&$\text{Im}\ L_0$                     &$\to$&0&     \\
 &     &$\downarrow i_{\ker} $&                                     &$\downarrow i_0$    &                                 &$\downarrow\lefteqn{i_{\text{Im}}}$  &     & &     \\
0&$\to$&$\ker L  \ \ $        &$\stackrel{i_L}{\longrightarrow}$    &$D(L)  $            &$\stackrel{L}  {\longrightarrow}$&Im\ $L$                             &$\to$&0\ \qquad \qquad (D1)
&\\
 &     &$\downarrow\Gamma_{\ker}$&                                    &$\downarrow\Gamma$  &                                 &$\downarrow\lefteqn{\Gamma_{\text{Im}}}$&  & &     \\
0&$\to$&$C(\ker L)$           &$\stackrel{i_C}{\longrightarrow}$    &$C(L)$              &$\stackrel{L_C}{\longrightarrow}$&Im\ $L/$ Im\ $L_0$                   &$\to$&0&     \\
 &     &$\downarrow\ \ \ \ \ $&                                     &$\downarrow$\ \ \ \ &                                 &$\downarrow$                         &     & &     \\
 &     & 0\ \ \ \ \ \         &                                     & 0\ \ \ \           &                                 & 0                                   &     & &  \\
\end{tabular}
\par\noindent
where the operator
$\Gamma_{\ker} : \ker L \to C(\ker L):= \ker L/\ker L_0$ is the factorization map, and the operators $i_C$ and $L_C$ are defined by the formulas $i_C(u+\ker L_0)=u+D(L_0)$, $L_C(u+D(L_0))=Lu+\text{Im\,}(L_0)$ as the closing squares
of diagram (D1) up to commutative ones.

The commutativity of all squares is obvious. Thus, diagram (D1) is commutative,
all columns and the top two rows are exact. From the algebraic
3x3-lemma (see \cite{Mac}), we obtain the exactness of the bottom row. Proved

\begin{st}\label{St1}
The diagram \textup{(D1)}
is commutative, its rows and columns are exact.
\end{st}

Diagram (D1) clearly denotes the decomposition of the maximal operator into a direct sum $$L=L_0\oplus L_{C}$$ into the interior part $L_0$ and the boundary part $L_C.$

Consider the following Vishik conditions (see \cite{Vishik}):
\begin{equation}\label{Vish1} \text{operator}\ L_0: D(L_0)\to H
\ \text{has a continuous left inverse;} \end{equation}
\begin{equation}\label{Vish2}\text{operator}\ L_0^+: D(L_0^+)\to H
\ \text{has a continuous left inverse.}\end{equation}

Note that, as follows from standard arguments in functional analysis,
these statements are equivalent to the statements, respectively:

\begin{equation}\label{Vish3} \text{operator}\ L^+: D(L^+) \to H \ \text{is surjective;}\end{equation}
\begin{equation}\label{Vish4} \text{operator}\ L: D(L) \to H \ \text{is surjective.}\end{equation}

The condition \eqref{Vish1} is obviously equivalent to the inequality
\begin{equation}\label{Vish3}\exists C>0,\forall\varphi\in C^\infty_0(\Omega),\ \|\varphi\|_H\le C\| \cal L\varphi\|_H.
\end{equation}

\begin{ex}\label {Ex1}  

In the paper \cite{Bur1}, several classes of differential operators were indicated for which conditions (\ref{Vish1}), (\ref{Vish2}) are satisfied in a bounded domain.
This list includes

i) scalar operators with constant coefficients (L. H\"ormander \cite{Horm}),

ii) scalar operators of principal type with constant leading part
(see 
assertion \ref{ConstForce}),

iii) scalar operators of constant strength (see 
assertion \ref{MainType}),

iv) matrix operators with constant complex
coefficients with the Paneyakh-Fuglede property,

v) matrix operators that are uniformly elliptic in the Douglis-Nirenberg sense
in a domain with a smooth boundary. \end{ex}

Condition $(\ref{Vish1})$ is equivalent to ker\,$L_0=0$ and the subspace Im\,$L_0$ is closed in $H$, and condition $(\ref{Vish2})$ is equivalent to Im\,$L=H$. Furthermore, it is known that for a closed Im\,$L_0$, the orthogonal decomposition $H=$\ Im$\,L_0\oplus \ker\,L^+$ holds. Below, we will use the orthogonal projection operator $P_C:H\to\ker L^+$. Thus, under conditions (\ref{Vish1}),(\ref{Vish2}), diagram (D1) turns into diagram (D2):
\vskip 5 pt

\begin{tabular}{ccccccccll}\label{D2}
 &     &                      &                                     &           0\ \ \ \ &                                 &  0                                  &     & &     \\
 &     &                      &                                     &$\downarrow$\ \ \ \ &                                 &$\downarrow $                        &     & &     \\
 &     &0                     &$\longrightarrow$                    &$D(L_0)$            &$\stackrel{L_0}{\longrightarrow}$&$\text{Im}\ L_0$                     &$\to$&0&     \\
 &     &$\downarrow$          &                                     &$\downarrow i_0$    &                                 &$\downarrow\lefteqn{i_{\text{Im}}}$  &     & &     \\
0&$\to$&$\ker L      $        &$\stackrel{i_L}{\longrightarrow}$    &$D(L)  $            &$\stackrel{L}  {\longrightarrow}$& $H$                         &$\to$&0\ \ \ \ \quad\qquad \qquad (D2)
&\\
 &     &$\downarrow\lefteqn{\Gamma_{\ker}}$&                                    &$\downarrow\Gamma$  &                               &$\downarrow\lefteqn{\Gamma_{\text{Im}}}$&  & &     \\
0&$\to$&$C(\ker L)$           &$\stackrel{i_C}{\longrightarrow}$    &$C(L)$              &$\stackrel{L_C}{\longrightarrow}$&\ ker\ $L^+$                    &$\to$&0&     \\
 &     &$\downarrow$          &                                     &$\downarrow$\ \ \ \ &                                 &$\downarrow$                         &     & &     \\
 &     & 0                    &                                     & 0\ \ \ \           &                                 & 0                                   &     & &  \\
\end{tabular}

\begin{tm}\label{T1}
Diagram \textup{(D2)}, constructed under conditions \eqref{Vish1},\eqref{Vish2}
is commutative, and its rows and columns are exact.
\end{tm}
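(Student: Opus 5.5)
The plan is to obtain (D2) from (D1) by specialization: Statement \ref{St1} already gives commutativity and exactness of (D1) in general, so it suffices to identify, under \eqref{Vish1} and \eqref{Vish2}, each object and map of (D1) with the corresponding one in (D2). Commutativity and exactness then transfer, because the identifications are isomorphisms compatible with the maps.

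First, \eqref{Vish1} gives a continuous left inverse of $L_0$. Hence $\ker L_0=0$, and $\text{Im}\,L_0$ is closed in $H$, since a bounded-below operator on a Hilbert space has closed range. The top-left corner of (D1) is therefore $0$, and $C(\ker L)=\ker L/\ker L_0=\ker L$, with $\Gamma_{\ker}$ the identity, an isomorphism. Exactness of the left column $0\to\ker L\to C(\ker L)\to 0$ is then immediate. This matches (D2), where the left column reads $0\to\ker L\stackrel{\Gamma_{\ker}}{\to}C(\ker L)\to 0$.

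Second, \eqref{Vish2} is equivalent to \eqref{Vish4}, i.e. $\text{Im}\,L=H$, and this replaces $\text{Im}\,L$ by $H$ in the middle row. For the bottom-right corner we need $\text{Im}\,L/\text{Im}\,L_0=H/\text{Im}\,L_0\cong\ker L^+$. Since $\text{Im}\,L_0$ is closed, the standard relation $(\text{Im}\,L_0)^\perp=\ker L_0^*=\ker L^+$ holds. Here one uses $L_0^*=L$ and, by the symmetric construction (the closure of $\mathcal L$ has the same adjoint as its restriction to $C_0^\infty$, whose adjoint is the maximal operator of $\mathcal L^+$), $L_0^*$ is the maximal operator $L^+$ of $\mathcal L^+$. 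This gives $H=\text{Im}\,L_0\oplus\ker L^+$. The quotient map $H\to H/\text{Im}\,L_0$ is then identified with the orthogonal projection $P_C:H\to\ker L^+$, an isomorphism $H/\text{Im}\,L_0\to\ker L^+$ given by $h+\text{Im}\,L_0\mapsto P_Ch$.

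We therefore take $\Gamma_{\text{Im}}=P_C$ and redefine $L_C(u+D(L_0))=P_CLu$. This is well defined because $P_CL_0=0$, as $\text{Im}\,L_0\perp\ker L^+$.

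Finally, we compose (D1) with these isomorphisms: the identity on $\ker L$ at the corner $C(\ker L)$, and $H/\text{Im}\,L_0\cong\ker L^+$ at the bottom-right. Isomorphic replacement of objects preserves exactness of every row and column, and it preserves commutativity provided the new maps are the conjugated old ones. One checks directly that the right column $0\to\text{Im}\,L_0\to H\stackrel{P_C}{\to}\ker L^+\to 0$ is exact, which is just the orthogonal decomposition. Likewise, the bottom-right square $P_C\circ L=L_C\circ\Gamma$ holds by the definition of $L_C$. Exactness of the bottom row $0\to C(\ker L)\to C(L)\to\ker L^+\to 0$ then follows from Statement \ref{St1}, or again from the $3\times3$-lemma.

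The main obstacle is the identification $\ker L_0^*=\ker L^+$ together with the closedness of $\text{Im}\,L_0$, i.e. justifying that the adjoint of the minimal operator of $\mathcal L$ is the maximal operator of $\mathcal L^+$. I would argue this directly from the definitions via distributions: $v\in D(L_0^*)$ if and only if $\mathcal L^+v\in L_2$ in the distributional sense. Everything else is bookkeeping on the diagram.
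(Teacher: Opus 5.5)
Your proposal is correct and follows the paper's own route: (D2) is obtained from (D1) by using $\ker L_0=0$ and the closedness of $\mathrm{Im}\,L_0$ from \eqref{Vish1}, $\mathrm{Im}\,L=H$ from \eqref{Vish2}, and the orthogonal decomposition $H=\mathrm{Im}\,L_0\oplus\ker L^+$, which identifies $\mathrm{Im}\,L/\mathrm{Im}\,L_0$ with $\ker L^+$ via $P_C$; exactness and commutativity are then inherited from Statement~\ref{St1}. One small correction: the adjoint identity you actually use is $L_0^*=L^+$ (equivalently $(L_0^+)^*=L$), so drop the clause ``$L_0^*=L$'', which repeats a notational slip in the paper's definition of $L$.
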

Diagram (D2) obviously denotes a decomposition of the maximal operator into a direct sum $$L=L_0\oplus L_{C}$$
into the interior part $L_0$ and the boundary part $L_C.$ This means, in particular,
that all questions concerning the boundary properties of functions from $D(L)$ or boundary value problems for the equation $Lu=f\in H$ pertain only to the operator $L_C$ and are not related to the behavior of the function $u$ inside the domain $\Omega$.

Similar to diagram (D2), a diagram can be constructed for the operators $L^+_0$ and $L^+$ with projections $\Gamma^+$ and $\Gamma^+_\text{Im}$.
Directly from diagram (D2) follows
\begin{st}\label{St1.3} (\cite{Vishik})
Let conditions (\ref{Vish1}),(\ref{Vish2}) be satisfied. Then
the decomposition
\begin{equation}\label{D(L)1} D(L)= D(L_0)\oplus\ker L\oplus W,
\end{equation}
where $W$ is a subspace of $D(L)$ such that $L:W\to \ker L^+$ is an isomorphism.
\end{st}

{\bf A homogeneous boundary value problem} (\cite{Horm}) is the problem of finding a solution to the relations
\begin{equation}\label{Luf} Lu=f,\ \ \Gamma u \in B,\end{equation}
\par
\noindent
where $B\,$ is some linear subspace in the boundary space $C(L)=D(L)/D(L_0)$,
defining the boundary value problem. Problem (\ref{Luf}) is called 
{\bf well-posed} if the operator $L_B=L|_{D(L_B)},$ $ D(L_B)= \Gamma^{-1} B$ is a solvable extension of the operator $L_0$,
i.e. if the operator $L_B: D(L_B)\to H^+ $ has a continuous inverse (which is also the right inverse of $L$).

The following statement is well known (M.Y. Vishik \cite{Vishik}; in L. H\"ormander's interpretation \cite{Horm}):

\begin{st}\label{StV} The operator $L_0 $ has a solvable extension
(and for the operator $L$ there is a well-posed boundary value problem (\ref{Luf}))
if and only if conditions (\ref{Vish1}) and (\ref{Vish2}) are both satisfied.
\end{st}

Directly from diagram (D2) follows
\begin{st}\label{St1.32} (\cite{Vishik})
Let conditions (\ref{Vish1}),(\ref{Vish2}) be satisfied.
For the extension $L_B$ to be solvable
(and for problem \eqref{Luf} to be well-posed), it is necessary and sufficient
that there exists a continuous operator $V:\ker L^+\to\ker L$ such that
\begin{equation}\label{D(L)}
D(L_B)=D(L_0)\oplus G(VL|_W),
\end{equation}
where $G(VL|_W)=\{w+VLw|w\in W\}$ is the graph
of the operator $VL|_W$. Moreover, $D(L)=D(L_B)\oplus \ker L$. \end{st}

Thus, example \ref{Ex1} presents known classes of operators for which there exists a well-posed boundary value problem in any bounded domain.
Since $\ker \mathcal L \vert_{C_0^\infty(\Omega} \subset \ker L_0$,
the equalities $\ker \mathcal L \vert_{C_0^\infty(\Omega)} = 0,$
$\ker \mathcal L^+\vert_{C_0^\infty(\Omega)} = 0$
are necessary for conditions \eqref{Vish1} and \eqref{Vish2} to be satisfied.
Theorem 13.6.15 from the book \cite{Horm4},
dating back to A. Plis, gives an example of an elliptic operator
for which condition \eqref{Vish1} is not satisfied, and therefore the following holds:

\begin{st}\label{StP} A fourth-order elliptic equation
$Lu=f$\ is constructed with $C^\infty$- coefficients in $\Bbb R^3$, for which in some ball there is not a single well-posed boundary value problem. \end{st}

Thus, example \ref{Ex1} presents known classes of operators for which there exists a
well-posed boundary value problem in any bounded domain.
Since $\ker \mathcal L \vert_{C_0^\infty(\Omega} \subset \ker L_0$,
the equalities $\ker \mathcal L \vert_{C_0^\infty(\Omega)} = 0,$
$\ker \mathcal L^+\vert_{C_0^\infty(\Omega)} = 0$
are necessary for conditions \eqref{Vish1} and \eqref{Vish2} to be satisfied.
Theorem 13.6.15 from the book \cite{Horm4},
dating back to A. Plis, gives an example of an elliptic operator
for which condition \eqref{Vish1} is not satisfied, and therefore the following holds:

\begin{st}\label{StP} A fourth-order elliptic equation
$Lu=f$\ is constructed with $C^\infty$- coefficients in $\Bbb R^3$, for which no well-posed boundary value problem exists in some ball. \end{st}

It is well known that for a uniformly elliptic equation
with a homogeneous real symbol in a bounded domain with a smooth ($C^{\infty}$) boundary $\partial\Omega$, the Dirichlet problem is well-posed (see, e.g., \cite{Lions}).

Recall also that the fulfillment of conditions \eqref{Vish1} and \eqref{Vish2} for an operator with constant complex coefficients in any bounded domain was proved by L. H\"ormander (\cite{Horm}).
Recall also that a differential operator with
coefficients $P^1(x,D)$ is weaker than the same operator $P^2(x,D): P^1\prec P^2$,
if for every $x\in\bar\Omega $ and for every $\xi\in{\Bbb R}^n\setminus 0,\ \ \tilde P^1(x,\xi)/\tilde P^2(x,\xi)\leq C$, where $(\tilde P(x,\xi))^2 =\sum\limits_{|\alpha|\leq l}|D_\xi^\alpha
P(x,\xi)|^2$. This comparison of operators means (see \cite{Horm}) that the inequality $\exists C>0,\forall\varphi\in C^\infty_0(\Omega),\ \|P^1\varphi\|_H\le C\|P^2\varphi\|_H,$ holds, and this is equivalent to the embedding $D(P^2_0)\subset D(P^1_0)$ with minimal operators.
Let us also recall that the differential operator $\mathcal L = \mathcal L(x,D)$ has a constant strength in the domain $\bar \Omega$ (\cite{Horm4}) if
$$\forall x\in\bar\Omega,\forall y\in\bar\Omega,\exists C>0,
\forall\xi\in
R^n,\ \tilde P(x,\xi)/\tilde P(y,\xi)\leq C,$$
where $|\tilde P(x,\xi)|^2 =\sum_{|\alpha|\leq l}
|D_\xi^\alpha \mathcal L(x,\xi)|^2$.

{\bf The constant strength operator} is represented as (\cite{Horm4})
\begin{equation}\label{P0sum}\mathcal L(x,D) = P^0(D) + \sum\limits_{j=1}^{N} q_j(x)P^j(D),\end{equation}
where $q_j\in C^\infty (\bar\Omega),$ and $ P^j $ are operators that form a basis
in the finite-dimensional vector space of operators
with constant coefficients that are weaker than
$P^0:P^j\prec P^0\,$. The formula \eqref{P0sum}, in particular, means
the coincidence of the domains of definition of the minimal operators
$D(L_0)=D(P^0_0)$. It is known (see, e.g., \cite{Horm4}, Theorem 13.5.2) that
for operators of constant strength, conditions (\ref{Vish1}) and (\ref{Vish2}) are satisfied in some neighborhood
of each point. The satisfaction of conditions (\ref{Vish1}) and (\ref{Vish2}) in the domain for operators of constant strength has also been proven for the case of coefficients $q_j$ that are analytic in some wider domain ((\cite{Horm4}), Section 13.5).

Conditions (\ref{Vish1}) and (\ref{Vish2}) in the domain are also valid for operators of real principal type of the form \eqref{P0sum}, that is, if $P^0(\xi)\in {\Bbb R}[\xi],\ \nabla P^0 \neq 0 $ for $\xi \neq 0$,
$q_i\in C^\infty(\overline\Omega)$,\,
ord\,$P^j<\,$ord\,$P^0$ (see \cite{Horm4}, section 13.5).

From the above and statement \ref{StV}, the following statements follow:

\begin{st}\label{ConstForce} The operator of constant force of the form \eqref{P0sum} admits a well-posed boundary value problem in some neighborhood of any point. If
its coefficients are analytic in the domain $\tilde\Omega$, then it admits
a well-posed boundary value problem in any subdomain $\Omega$ such that $\overline\Omega\subset \tilde\Omega$.\end{st}

\begin{st}\label{MainType} A real principal type operator of the form \eqref{P0sum}, where $P^j-$ are arbitrary operators of degree less than $m=\deg P^0=\deg \mathcal L$, admits a well-posed boundary value problem in a bounded domain $\Omega$.\end{st}

An example of a well-posed boundary value problem for a non-elliptic equation in a bounded domain is given by the following:

\begin{st}\label{St51}
\ (\cite{Bur1} or \cite{Bur5}) The following boundary value problem in the unit disk $D_{0,1}=\{x\in {\Bbb R^2}|\ |x|<1\}$
is well-posed in $L_2(D_{0,1})$:
\begin{equation}\label{5.1}
\square u=u_{x_1x_2}=f\in L_2(D_{0,1}),
u|_{\Gamma _1}=0,\ u_\nu ^{\prime }|_{\Gamma _2}=0,
\end{equation}
where $\Gamma _1=\{|x|=1,\ \frac {\pi}{2}\le \tau \le 2\pi \}$, $\Gamma _2=\{|x|=1,\ \pi \le \tau \le \frac{3\pi}{2}\}$, $\tau -$
angle variable.
\end{st}




\section{Traces of the $L_2$-solution on the boundary of the domain} \label{S3}

In this section, we will point out that, under broad assumptions on the differential operator, each solution of an equation in $L_2(\Omega)$ has some traces on the boundary of the domain, understood in terms of generalized functions, and we will consider the properties of the traces of solutions of a specific general equation.

Here and below, we assume that {a given bounded domain $\Omega$ has a $C^\infty$-smooth boundary $\partial \Omega.$}
For the general operator
$L=L(x,D)=$ $=\sum_{|\alpha |\le l}a_\alpha(x) D^\alpha$
we construct Green's formula for $H^l$-smooth $u$ and $v$:$\quad \forall u,\forall v\in H^l(\Omega):$
\begin{equation}\label{5.1}
\int\limits_\Omega (Lu\,
\overline{v}-u\,\overline{L^{+}v})\ dx=
\sum_{q=0}^{l-1}\ \int\limits_{\partial \Omega }\ L_{(l-q-1)}u\ \overline{\partial_\nu ^q v}\ ds.
\end{equation}
where for $j=0,...,l-1$\quad $L_{(j)}\,u-$ is a linear differential expression of order $j$. Expressions $L_{(j)}\,u$ are obtained for smooth functions $u$ and $v$ by rearranging the derivatives of $\int_\Omega Lu\,\overline {v}\,dx$ to $\int_\Omega u\,\overline {L^+v}\,dx$.
The same formula \eqref{5.1} can be written differently:
\vskip 2 pt
\centerline{$\int\limits_\Omega (Lu\,
\overline{v}-u\,\overline{L^{+}v})\ dx=
=-\sum\limits_{q=0}^{l-1}\ \int\limits_{
\partial \Omega }\ \partial_\nu ^q u\ \overline{L^+_{(l-q-1)}v}\ ds.$}

It can be calculated that for $k=0,1,...,l-1$ these expressions can be written as $L_{(k)}u =\sum\limits_{j=0}^{k} L_{(kj)} \partial^j_\nu u, $
$L^+_{(k)}v =\sum\limits_{j=0}^{k} L^+_{(kj)} \partial^j_\nu v $
with some operators
$L_{(kj)}:H^{l-j-1/2}(\partial\Omega) \to H^{l-k-1/2}(\partial\Omega),$
$\ L^+_{(kj)}:H^{l-j-1/2}(\partial\Omega) \to H^{l-k-1/2}(\partial\Omega)$,
Where $L_{(kj)}=0;\,L^+_{(kj)}=0$ for $j>k$, $0\leq k,j\leq l-1.$
Here $L_{(kj)},\,L^+_{(kj)}-$ are differential operators in the tangent
directions to the smooth boundary $\partial\Omega$ of order ord$\,L_{(kj)}$= ord$\,L^+_{(kj)}=k-j$, and the $C^\infty$-smooth coefficients of these linear operators are generated by the coefficients $a_\alpha(x)$ and the direction cosines of the normal $\nu$.
More explicit calculations of these expressions are available, for example, in the paper by \cite{Roitberg}, despite the fact that this paper is aimed at elliptic equations. For a given smooth function $u\in H^l(\Omega)$ {\bf differential expression
$L_{(q)}u, q=0,...,l-1$ will be called the $q$-th $L$-trace of the function $u$, and the entire set $\{L_{(q)}u\}|_{q=0}^{l-1}$ will be called the set of $L$-traces of the function $u$} (on the boundary $\partial\Omega$ of the domain $\Omega$).

If we are given some set of $l$ functions
$g_q\in H^{q-1/2}(\partial\Omega),$ $q=0,1,...,l-1,$ then
we can find a function $g\in H^l(\Omega)$ for which the set $\{g_q\} |_{q=0}^{l-1} $ is a set of ordinary traces
$g_q=\partial_\nu^q g|_{\partial\Omega}.$
Such a function $g$ can be obtained, for example, by finding the usual weak solution of the Dirichlet problem $\Delta^l g=0,$ $\partial_\nu^q\hat u=g_q|_{q=0}^{l-1}$ for the polyharmonic equation. Then the expressions $L_{(k)}g =\sum_{j=0}^{k} L_{(kj)} g_j,\ L^+_{(k)}g =\sum_{j=0}^{k} L^+_{(kj)} g_j $ will be called {\bf $L$-traces generated by the set of functions $\{g_q\}$}. It can be proved that the set of functions $g_j$ is quite sufficient for constructing the functions $L_{(k)}g$ and $L^+_{(k)}g$, since the expressions for
$L_{(k)}g$ and $L^+_{(k)}g$ include differential operators in tangent directions applied to the functions $g_q$. But since we do not specify the form of these expressions here, we specify the function in the domain that generates these expressions.


Let $D(\tilde L)$ denote the closure of $C^{\infty}(\overline\Omega)$ in $D(L)$ (or the completion in the graph norm) and, correspondingly, $D(\tilde L^+)$ denote the closure of $C^{\infty}(\overline\Omega)$ in $D(L^+)$. If some function $u\in D(L)$ is approximated
by smooth functions in the graph norm, then, passing to the limit in Green's formula \eqref{5.1}, we can see that each expression $L_{(j)}\,u$ will tend to a generalized function over $\partial\Omega$, which we will also denote by $L_{(j)}\,u$ and {\bf we will call the $j$-th $L$-trace of the function $u\in D(\tilde L)$}. In the paper \cite{Bur2} the following statement is proved (see also \cite{Bur5} or \cite{Bur1}):

\begin{st}\label{St31}
\qquad For any function $u\in D(\tilde L)$ there exist generalized functions
$L_{(j)}u\in $
\newline $H^{-j-1/2}(\partial\Omega),\ j=0,..., l-1$ such that Green's formula \eqref {5.1} holds for any function $v\in H^l(\Omega)$, where the integral, naturally, is understood to be the action of the functional on the underlying function. If the original function is smooth: $u\in H^l(\Omega)$, then the generalized function $L_{(j)}u\,$ is the $L$-trace of the function $u(x)$ introduced above and, moreover, $L_{(j)}u\in H^{l-j-1/2}(\partial\Omega)$.
\end{st}

It turns out that functions from $D(L_0)$ are distinguished by the fact that all these $L$-traces are equal to zero. In the same paper \cite{Bur2} the following statement is proved (see also \cite{Bur5} and \cite{Bur1}):
\begin{st}\label{St32}
An element $u\in D(\tilde L)$ belongs to the space $D(L_0)$ if and only if all its $L$-traces are trivial: $L_{(j)}\,u=0,$ $j=0,1,...,l-1.$
\end{st}
As examples show, the usual traces $u|_{\partial\Omega},\,\partial_\nu u|_{\partial\Omega},..., \partial^{(l-1)}_\nu u 
|_{\partial\Omega}$ for functions from $D(\tilde L)$, generally speaking, are not exist
even in generalized functions (\cite{Bur5},\cite{Bur1}). Therefore, the general linear boundary value problem understood in $L_2(\Omega)$ should be written in terms of $L$-traces:
\begin{st}\label{St33}
Under the condition $D(\tilde L)=D(L)$
the general linear boundary value problem $\Gamma u\in B$ for the equation $Lu=f$ in $L_2(\Omega)$ is written as
\begin{equation}\label{1.4}
B_\partial u:=B_i^{0}L_{(0)}u+B_i^{1}L_{(1)}u+...+B_i^{l-1}L_{(l-1)}u=0,
\quad i=1,...,l
\end{equation}
where $B_i^{k}:H^{-k-1/2}(\partial\Omega)\to H^{-i-1/2}(\partial\Omega)$
are given linear operators.
\end{st}

\begin{ex}\label{Ex2}
The conditions
\begin{equation}\label{1.42}
D(\tilde L)=D(L),\ D(\tilde L^+)=D(L^+)
\end{equation}
in a bounded domain are proved for the following classes of differential operators (see references and comments in the article \cite{Bur1} or in the book \cite{Bur5}):

\textcolor{blue}{\bf i)}\ \ scalar operators with constant coefficients
in a domain with the H\"ormander $T$ condition,

\textcolor{blue}{\bf ii)}\ \
for a scalar linear uniformly elliptic
operator in any domain with the cone condition,

\textcolor{blue}{\bf iii)} for a hypoelliptic operator with constant coefficients in any domain,

\textcolor{blue}{\bf iv)}
for any scalar differential operator of real
principal type in any domain compactly embedded
in the original domain.
\end{ex}




Consider the orthogonal decomposition $$H=\text{ Im}L_0\oplus \ker L^+,\quad f=f_0+f_C.$$
In the paper \cite{Bur1} (see also \cite{Bur5}), by applying the theory of rigged spaces to the triple $D(L_0)\subset H\subset D'(L_0)$, the following was obtained
\begin{st}\label{St52} Under conditions (\ref{Vish1}),(\ref{Vish2})

1) the operator ${\mathcal M}={\mathcal L}^+{\mathcal L}:D(L_0)\to D'(L_0)$ is a linear isomorphism, so the functions $f_0$ and $f_C$ can be obtained as follows:
$f_0=L_0{\mathcal M}^{-1}{\mathcal L^+}f,$ $f_C=f-f_0\in \ker L^+,$

2) For each solution $u\in D(L)$ of the equation $Lu=f$, there exists a unique
solution $u_C$ of the equation
$L_Cu_C=f_C, \text{ where } u=u_0+u_C,\ \ Lu_0=f_0.$
\end{st}

{\bf Explanation.} By virtue of the decomposition $L=L_0\oplus L_{C}$ (statement \ref{St1}),
the general boundary value problem
$Lu=f\in H,\ \Gamma u\in B$
under conditions (\ref{Vish1}),(\ref{Vish2}) reduces to the boundary value problem
$Lu_C=f_C\in \ker L^+,\ u_C=\Gamma u\in B,$
since the solution $u_0$ of the equation $Lu_0=f_0$ is uniquely found.

Now we will show that the equation $Lu=f$ can be transferred to the boundary space
$C(L)$ as a certain relation on the $L$-traces of the solution.

\begin{tm}\label{St54}
Let $\Omega$ be a bounded domain with a smooth boundary
$\partial \Omega $, and let the original differential operator $ \mathcal L = \sum\limits_{|\alpha |\leq l} a_\alpha(x) D^\alpha $ be an arbitrary linear operator with smooth complex coefficients satisfying the Vishik conditions (\ref{Vish1}), (\ref{Vish2}), the conditions \eqref{1.42}, and also the condition:

\qquad in the kernel $\ker L^+$ with topology $H$ the smooth functions are dense:
\begin{equation}\label{550} \textup{closure}_H (\ker L^+\cap H^l(\Omega))=
\ker L^+ . \end{equation}
Let $f\in H$ be a given function, which, by virtue of the orthogonal decomposition
$H=D(L_0)\oplus \ker L^+$, can be decomposed as $f=f_0+g,\ f_0\in D(L_0),
\ g=P_C\,f\in\ker L^+.$

{\bf 1.} For a function $u\in D(L)$ to be a solution to the equation ${\mathcal L}u=f\in H$, it is necessary that its $L$-traces $g_q=L_{(q)}u\in H^{-q-1/2}(\partial\Omega)$ and the function $g$ satisfy the identity (where the integral is understood as the pairing of the generalized and basic functions)
\begin{equation}\label{551}
\forall v\in \ker L^+\cap H^k(\Omega),\
\int\limits_\Omega g\,
\overline{v}
\,dx= \sum_{q=0}^{l-1}\ \int\limits_{\partial \Omega }\ g_{l-q-1}\ \overline{\partial_\nu ^q v}\ ds.
\end{equation}

{\bf 2.} Converse: Let $g_q\in H^{-q-1/2}(\partial\Omega)$ be the set of $L$-traces of some function $\hat u\in D(L):$ $g_q=L_{(q)}\hat u$, such that for some function $g\in\ker L^+$ the equality \eqref{551} holds. Then, for any function $f\in H$ such that $g=P_C\,f$, there exists a unique solution $u\in D(L)$ of the equation $Lu=f$ whose $L$-traces are the functions $g_q.$
\end{tm}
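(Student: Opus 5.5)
Both parts rest on Green's formula \eqref{5.1} in its extended form from Statement \ref{St31}, valid for $u\in D(\tilde L)=D(L)$ (by \eqref{1.42}) and $v\in H^l(\Omega)$. The orthogonal decomposition $H=\text{Im}\,L_0\oplus\ker L^+$ supplies the link between $f$ and $g=P_Cf$. (The statement writes $D(L_0)$ for the first summand; I read it as $\text{Im}\,L_0$, so $f_0\in\text{Im}\,L_0$. I also read $H^k$ in \eqref{551} as $H^l$.)

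\textbf{Part 1.} Let $u\in D(L)$ with $Lu=f$, and take $v\in\ker L^+\cap H^l(\Omega)$. Apply Statement \ref{St31} to $u$ and $v$. Since $L^+v=0$, the term $\int u\,\overline{L^+v}\,dx$ drops, leaving
\[
\int_\Omega f\,\overline v\,dx=\sum_{q=0}^{l-1}\int_{\partial\Omega}L_{(l-q-1)}u\,\overline{\partial_\nu^q v}\,ds .
\]
Next, $f_0=L_0w$ for some $w\in D(L_0)$. The function $w$ lies in $D(L_0)\subset D(\tilde L)$, and by Statement \ref{St32} all its $L$-traces vanish. Green's formula applied to $w$ and $v$ therefore gives $(f_0,v)=0$. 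Alternatively, this follows directly from $f_0\perp\ker L^+$. Hence $\int f\bar v=\int g\bar v$, which is exactly \eqref{551} with $g_q=L_{(q)}u$.

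\textbf{Part 2, existence.} Let $\hat u\in D(L)$ have $L$-traces $g_q$, and suppose \eqref{551} holds with $g\in\ker L^+$. First I show that $L\hat u-g\in\text{Im}\,L_0$. Green's formula for $\hat u$ and $v\in\ker L^+\cap H^l$ gives $(L\hat u,v)=\sum_q\int g_{l-q-1}\overline{\partial_\nu^qv}$. By \eqref{551} this equals $(g,v)$. So $L\hat u-g$ is orthogonal to $\ker L^+\cap H^l(\Omega)$, and by the density assumption \eqref{550} it is orthogonal to all of $\ker L^+$. Thus $L\hat u-g\in(\ker L^+)^\perp=\text{Im}\,L_0$, the latter being closed by \eqref{Vish1}.

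Now fix $f$ with $P_Cf=g$. Then $f-g=f_0\in\text{Im}\,L_0$. Set
\[
u=\hat u+w,\qquad w=L_0^{-1}\bigl(f-L\hat u\bigr).
\]
This is well defined because $f-L\hat u=f_0-(L\hat u-g)\in\text{Im}\,L_0$ and $L_0$ is injective by \eqref{Vish1}. Then $Lu=L\hat u+f-L\hat u=f$. Since $w\in D(L_0)$, Statement \ref{St32} gives $L_{(q)}w=0$, so $L_{(q)}u=L_{(q)}\hat u=g_q$.

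\textbf{Part 2, uniqueness.} Let $u_1,u_2$ both solve $Lu=f$ with the same $L$-traces, and put $z=u_1-u_2$. Then $z\in D(L)=D(\tilde L)$, all its $L$-traces vanish, and by Statement \ref{St32} $z\in D(L_0)$. Moreover $L_0z=Lz=0$, and $\ker L_0=0$ by \eqref{Vish1}, so $z=0$.

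\textbf{Main obstacle.} The key step is the passage from orthogonality against smooth kernel elements to orthogonality against all of $\ker L^+$. This is precisely where \eqref{550} is needed, and it requires that \eqref{551} be stated with test functions regular enough ($H^l$) for Statement \ref{St31} to apply. Throughout, I also rely on \eqref{1.42}, so that every element of $D(L)$ has $L$-traces and Statement \ref{St32} characterizes $D(L_0)$.
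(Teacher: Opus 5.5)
Your proof is correct and follows the paper's argument. Part 1 uses Green's formula from Statement \ref{St31} together with $f_0\perp\ker L^+$; part 2 compares \eqref{551} with Green's formula for $\hat u$, uses the density condition \eqref{550}, and corrects $\hat u$ by an element of $D(L_0)$ whose $L$-traces vanish by Statement \ref{St32}. The differences from the paper are only cosmetic or additions: you merge the paper's two corrections $\hat u_0$ and $u_0$ into the single $w=L_0^{-1}(f-L\hat u)$, and you explicitly prove uniqueness (via Statement \ref{St32} and $\ker L_0=0$), which the paper asserts but does not argue.
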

{\bf Proof 1}. Let $u\in D(L)$ be the solution of the equation ${
L}u=f\in H$. In the domain $\Omega$, conditions \eqref{1.42} are satisfied (see example \ref{Ex2}), therefore, by statement \ref{St31}, Green's formula \eqref{5.1} is valid for all
$v\in H^k(\Omega)$. Substituting the function
$f$ in place of $Lu$ and $v\in \ker L^+\cap H^k(\Omega)$ into formula \eqref{5.1}, we obtain the equality \eqref{551} due to the equality $\int_\Omega f\,\overline{v}\,dx=\int_\Omega f_C\,
\overline{v}\,dx.$

{\bf 2}. Conversely, let $f\in H,\ f=f_0+g,\,g\in\ker L^+,\,f_0=L_0u_0$ and functions $g_q\in H^{-q-1/2}(\partial\Omega)$ be given, which are $L$-traces of some function $\hat u\in D(L):\ g_q=L_{(q)}u$ and the identity \eqref{551} holds for them.
Then, by statement \ref{St31}, Green's formula \eqref{5.1} holds for $u=\hat u$ and for all $v\in H^k(\Omega)$, and for
$v\in \ker L^+\cap H^k(\Omega)$ we have the formula
\begin{equation}\label{552}\int\limits_\Omega L\hat u\,
\overline{v}
\,dx= \sum_{q=0}^{l-1}\ \int\limits_{%
\partial \Omega }\ g_{l-q-1}\ \overline{\partial_\nu ^q v}\ ds,
\end{equation}
Comparing with equality \eqref{551}, we obtain that
$\int_\Omega (g-L\hat u)\overline v\, dx=0$
for all $v\in \ker L^+\cap H^k(\Omega)$, and by \eqref{550}, that for all
$v\in \ker L^+$. From the orthogonality of the decomposition $H=$\ Im$\,L_0\oplus \ker\,L^+$
we obtain that $g-L\hat u\in$ Im$L_0,$ that is, there exists $\hat u_0\in D(L_0), \,L_0\hat u_0=g-L\hat u,$ which implies that $L(\hat u+\hat u_0)=g$. Then
the desired function $u\in D(L)$ is $u=\hat u+\hat u_0+u_0,$ since $Lu=g+f_0=f$, and from the statement \ref{St32} it follows that the $L$-traces of $u$ and $\hat u$ coincide. $\blacksquare$

Note that in the statement and proof of Theorem \ref{St54}, condition \eqref{550} can be eliminated if the right-hand side in equalities \eqref {551} and \eqref {552} is treated as a bilinear form
$(Lu\,\bar v\,dx-u\,\overline{L^+v})\,dx,$ without introducing the definition of $L$-traces.

Note also that in the orthogonal decomposition $f\in H,\ f=f_0+g,\,g\in\ker L^+,\,f_0=L_0u_0$, the orthogonal projection $Id_H-P_C:H\to$ Im$\,L_0$ can be constructed explicitly
according to the following scheme, presented in the author's works \cite{Bur1} or \cite{Bur6} (see also \cite{Bur4}). We will understand the operator $\mathcal L^+,$ as the adjoint operator to the minimal $L_0$ in the sense of the theory of rigged spaces, acting as in generalized functions $\mathcal L^+:H\to D'(L_0)$, where $D'(L_0)$ is the adjoint space of $D(L_0)$ with respect to the topology of the central space $H$. This operator gives rise to the generalized Dirichlet problem
$$\mathcal L^+\mathcal L u=f\in D'(L_0),\,u\in D(L_0)$$
(recall the statement \ref{St32}), which is posed as the problem of finding a solution $u\in D(L_0)$ of the integral identity \begin{equation}\label{eq52}
<L_0\,u,L_0\,\varphi>_H=<f,\varphi>,\,\forall\varphi\in D(L_0).
\end{equation}
It is proved that this Dirichlet problem is well-posed (there exists a continuous operator $M:D'(L_0)\to D(L_0)$ that solves the problem \eqref{eq52}) if and only if the first Vishik condition \eqref{Vish1} is satisfied. And now the projection $f_0=(Id_H-P_C)f$ can be constructed as follows: $f_0=L_0\,M\,\mathcal L^+f.$

Note that condition \eqref {551} characterizes the set of traces of solutions of the equation $Lu=f$ in the boundary space $C(L).$ However, the question of characterizing the boundary space $C(L)$ itself as a subspace in $H^{-l-1/2}(\partial\Omega)\times H^{-l-3/2}(\partial\Omega)\times ...\times H^{-1/2}(\partial\Omega).$ remains unresolved. Note that this question for the case of a uniformly elliptic operator $L=-\sum_{|\alpha|\le 2}a(x)D^\alpha$, $a_0(x)\le 0$ was solved by M.Y. Vishik in the same paper \cite{Vishik}. Namely, it was proved that in a bounded domain with a smooth boundary
\begin{equation}\label{eq521}C(\Delta)\approx H^{-1/2}(\partial\Omega)\times H^{1/2}(\partial\Omega).\end{equation}

More precisely, the boundary data spaces $H_1\{\psi_0|\,\exists u\in D(\Delta), \psi_0=u|_{\partial\Omega}\}$ and $G_1=\{\psi_1|\,\exists u\in D(\Delta), \psi_1=\partial_\nu u|_{\partial\Omega}\}$ introduced by Vishik in his paper \cite{Vishik} can be identified with the Sobolev-Slobodetskii spaces $H^{-1/2}(\partial\Omega)$ and $ H^{-3/2}(\partial\Omega)$ introdu\-ced by Slobodetskii after the publication of \cite{Vishik}. Moreover, in the work \cite{Vishik} it is proved that for each $\psi_0$ from the boundary space $H_1$, which we now understand as $H^{-1/2}(\partial\Omega)$, there exists a unique solution $u\in D(L)$ of the Dirichlet problem
$\Delta u=0,\, u|_{\partial\Omega}=\psi_0\in H^{-1/2}(\partial\Omega).$
Note that for the Laplace operator $L=\Delta$, the $\Delta$-traces are $\Delta_{(0)} u=u|_{\partial\Omega}=\psi_0\in H^{-1/2}(\partial\Omega)$, $-\Delta_{(1)} u=\partial_\nu u|_{\partial\Omega}=\psi_1\in H^{-3/2}(\partial\Omega)$, and that
due to uniform ellipticity, the leading symbol $\mathcal L(x,\xi)=\sum_{|\alpha|=2}a_\alpha (x)\xi^\alpha$ of the operator $L$ is separated from zero, and this, as can be shown, means that, given $L$-traces
$\chi_0=L_{(0)}u\in H^{-1/2}(\partial\Omega),\,\chi_1=L_{(1)}u\in H^{-3/2}(\partial\Omega)$, one can find the functions $\psi_0=u|_{\partial\Omega}\in H^{-1/2}(\partial\Omega),\,\psi_1=\partial_\nu u|_{\partial\Omega}\in H^{-3/2}(\partial\Omega)$.
In his work \cite{Vishik} introduced the operator $P:H_1\to G_1$, acting according to the rule $P\psi_0=\hat\psi_1,\, \hat\psi_1=\partial_\nu w|_{\partial\Omega},$ where $ \Delta w=0,\, w|_{\partial\Omega}=\psi_0.$ And it was proved that $\forall u\in D(L)$, the function $\partial_\nu u|_{\partial\Omega}-Pu|_{\partial\Omega}= \psi_1-P\psi_0=\psi$ lies in some space $H_2$, which we can identify with the space $H^{1/2}(\partial\Omega),$ and the operator $u\to\psi\in H_2$ is surjective, which proves the isomorphism \eqref{eq521}.


Now let the function $u\in H^l(\Omega).$ Interchanging $L$ and $L^+$, we obtain from formula \eqref {5.1} the identity
\begin{equation}\label{5.13}
\forall v\in D(L^+),\int\limits_\Omega (Lu\,
\overline{v}-u\,\overline{L^{+}v})\ dx=
-\sum_{q=0}^{l-1} \int\limits_{
\partial \Omega }\ \partial_\nu^q u\ \overline{L^+_{(l-q-1)}v}\ ds.
\end{equation}

As in theorem \ref{St54} for the equation $Lu=f$, this equality implies that the traces
$h_q=\partial_\nu^q u|_{\partial\Omega}$ of the solution $u\in H^l(\overline\Omega)$ satisfy the condition necessary for the equality
$Lu=f$,
\begin{equation}\label{5.14}
\forall v\in \ker L^+,\int\limits_\Omega f\,{\overline v}\ dx=
-\sum_{q=0}^{l-1} \int\limits_{\partial \Omega }\ h_q\ \overline{L^+_{(l-q-1)}v}\ ds,\end{equation}
in which the integral on the left can be written, as $\int_\Omega g\,\overline v\,dx,$
where $g=P_C f$, recall: $f=f_0+g,\,g\in\ker L^+,\,f_0\in\ $Im$\,L_0.$
Statement 1 of the following theorem is proved.

\begin{tm}\label{St55}
Under the conditions of Theorem \ref{St54}

{\bf 1.} Let $u\in H^l(\Omega)$ be a solution of the equation $Lu=f$
with some $f\in H$. Then the traces $h_q=\partial_\nu^q u \in H^{l-q-1/2} (\partial\Omega),$ $q=0,...,l-1$ of $u$ satisfy the identity \eqref{5.14}.

{\bf 2.} Let $f\in H$ and $h_q\in H^{l-q-1/2} (\partial\Omega),$ {\small $q=0,...,l-1$} be given, which satisfy the identity \eqref{5.14}.
Then there exists a unique function $u\in D(L)$ that is a solution of the equation $Lu=f$, whose $L$-traces
$L_{(q)}u\in H^{l-q-1/2} (\partial\Omega),\,q=0,...,l-1$ are generated by the functions $h_q$, that is, $L_{(q)}u=L_{(q)}h$.
\end{tm}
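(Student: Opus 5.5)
Part 1 has already been established in the text preceding the statement: we substitute $v\in\ker L^+$ into \eqref{5.13} (valid for $u\in H^l(\Omega)$), which gives \eqref{5.14}. The only point to add is that $\int_\Omega f\,\overline v\,dx=\int_\Omega g\,\overline v\,dx$, because $f_0\in\,$Im$\,L_0\perp\ker L^+$.

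For part 2 I would follow the proof of Theorem \ref{St54}, part 2, with the roles of the two Green formulas exchanged.

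First, lift the data. Given $h_q\in H^{l-q-1/2}(\partial\Omega)$, take $h\in H^l(\Omega)$ with $\partial_\nu^q h|_{\partial\Omega}=h_q$, $q=0,\dots,l-1$; for instance, $h$ can be the solution of the polyharmonic Dirichlet problem, as in the definition of $L$-traces generated by $\{h_q\}$. Since $h\in H^l(\Omega)\subset D(\tilde L)=D(L)$, we may apply \eqref{5.13} to $u=h$ and $v\in\ker L^+$. Because $L^+v=0$, this gives
$$\int_\Omega Lh\,\overline v\,dx=-\sum_{q=0}^{l-1}\int_{\partial\Omega}h_q\,\overline{L^+_{(l-q-1)}v}\,ds .$$
Here one must check that \eqref{5.13} holds for all $v\in D(L^+)$ when $u\in H^l$. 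I would obtain this by approximating $v$ in the graph norm of $L^+$ by smooth functions, using $D(\tilde L^+)=D(L^+)$ from \eqref{1.42}, and applying Statement \ref{St31} to $L^+$. This is also the step where the boundary terms must be read as pairings of $h_q\in H^{l-q-1/2}$ with $L^+_{(l-q-1)}v\in H^{-(l-q-1)-1/2}$. I expect this to be the main technical obstacle. If it causes difficulty, I would restrict to $v\in\ker L^+\cap H^l(\Omega)$ and then extend by the density condition \eqref{550}, exactly as in Theorem \ref{St54}.

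Next, compare this identity with \eqref{5.14}. Subtracting gives $\int_\Omega (g-Lh)\,\overline v\,dx=0$ for all $v\in\ker L^+$, using \eqref{550} if we had only worked with smooth $v$. By the orthogonal decomposition $H=\text{Im}\,L_0\oplus\ker L^+$, it follows that $g-Lh\in\text{Im}\,L_0$. So there is $\hat u_0\in D(L_0)$ with $L_0\hat u_0=g-Lh$. Let $u_0\in D(L_0)$ satisfy $L_0u_0=f_0$; it exists because $f_0\in\text{Im}\,L_0$, as in Statement \ref{St52}. Then $u=h+\hat u_0+u_0$ satisfies $Lu=Lh+(g-Lh)+f_0=f$. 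By Statement \ref{St32}, elements of $D(L_0)$ have zero $L$-traces, so $L_{(q)}u=L_{(q)}h$. Since $h\in H^l(\Omega)$, these traces lie in $H^{l-q-1/2}(\partial\Omega)$ by Statement \ref{St31}.

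Finally, uniqueness. If $u_1,u_2$ are two such solutions, then $w=u_1-u_2\in D(L)=D(\tilde L)$ has zero $L$-traces. By Statement \ref{St32}, $w\in D(L_0)$. Since $L_0w=0$ and $\ker L_0=0$ by the Vishik condition \eqref{Vish1}, we get $w=0$.
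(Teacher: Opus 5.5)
Your proof is correct and follows the paper's route: lift the $h_q$ to a function in $H^l(\Omega)$ (e.g.\ via the polyharmonic Dirichlet problem), then rerun the converse argument of Theorem \ref{St54}, using $H=\text{Im}\,L_0\oplus\ker L^+$ and Statement \ref{St32} to identify the $L$-traces. You also spell out two points the paper leaves implicit, and both are handled correctly: the validity of \eqref{5.13} for nonsmooth $v\in\ker L^+$, via $D(\tilde L^+)=D(L^+)$ and Statement \ref{St31} applied to $L^+$; and uniqueness, via Statement \ref{St32} together with $\ker L_0=0$ from \eqref{Vish1}.
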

{\bf Proof}.
We prove assertion 2 of this theorem. We consider the question of the sufficiency of condition \eqref{5.14} for the existence of a solution $u\in D(L)$ of the equation $Lu=f$.
Given the set of traces $g_q\in H^{l-q-1/2} (\partial\Omega)$, we find a function
$\hat u\in H^l(\Omega)$ whose traces are $g_q=\partial_\nu^q\hat u$
(for example, as above, we find a weak solution to the Dirichlet problem
$\Delta^l \hat u=0,\,\partial_\nu^q\hat u|_{\partial\Omega}=g_q$).
Then, applying the arguments of the converse of Theorem \ref{St54}, we obtain the existence of a solution $u\in D(L),$ whose $L$-traces are obtained from the traces of $g_q$
as in formula \ref{5.1}. The proof is complete. $\blacksquare$

Note that such an $L$-trace $L_{(q)}u$, like the trace $g_q$, lies in the smooth space $H^{l-q-1/2} (\partial\Omega),$ but the solution $u\in D(L)$, whose existence and uniqueness is asserted, is, generally speaking, not smooth.
It can be proved that the smoothness of this solution $u\in D(L)$ can be obtained for an elliptic operator $L$, but we will not prove this here.

We also note the natural question of how to recover the solution whose existence is asserted in Theorems \ref{St54} and \ref{St55}.
A natural answer is to represent the solution as a sum of potentials,
when there is a fundamental solution $\mathcal E_y(x)$ of the operator of the original equation that has sufficiently smooth traces; see \cite{Bur5}, 1.3.4. If the function $\mathcal E_y(x)-$ is generalized, then it can be approximated by smooth functions, but the limiting step must be justified. Incidentally, the existence of a fundamental solution $\mathcal E^+_y(x)$ of the operator $\mathcal L^+$ makes it possible to explicitly solve the equation $L_0u=f_0\in$ Im$\,L_0$ (cf. the statement \ref{St52}) by means of the formula $$u(y)=<L_0u,\mathcal E^+_y>_x,$$ if we assume that the generalized function $\mathcal E^+_y(x)$ is approximated by smooth functions and use Green's formula.

\section{Traces of the solution of an equation with constant coefficients} \label{S4}

In this section, we will consider the case of a general differential equation with constant coefficients in the domain. Then the conditions for the connection of the solution traces \eqref{5.14} take the form of a generalized moment problem.

Let us recall an important definition. 
For a given differential operator $ \mathcal L = \sum\limits_{|\alpha |\leq l} a_\alpha D^\alpha $ with constant complex coefficients, the domain
$\Omega$ is called {\bf $\mathcal L-$convex for supports} if for every compact set $K\subset\Omega$, there is a compact set $K'\subset\Omega$ such that the conditions $\varphi\in C^\infty_0$ and
supp$\,\mathcal L\,\varphi\subset K$ imply that supp$\,\varphi\subset K'$
(\cite {Horm4}).

Below, we will use the following so-called approximation theorem (Malgrange, see \cite {Horm4}):
\begin{st}\label{St53}
Every solution $v\in H$ of the equation $Lv=0$ with constant complex coefficients in a $\mathcal L$-convex domain $\Omega$ can be approximated
(by linear combina\-tions)

\textcolor{blue}{\bf 1)} by polynomial-exponential solutions $Q(i(\xi,x))e^{i(\xi,x)},\ \xi\in \Lambda,$ where
$\Lambda=\{\xi\in{\Bbb C}^n\,|\ \sigma(\xi)=0\}$ is the algebraic variety of zeros of the symbol $\sigma(\xi)=$ $\sum_{|\alpha|\le l}a_\alpha\xi^\alpha $ in the general case,

\textcolor{blue}{\bf 2)} exponential solutions of the form $e^{i(\xi,x)},\ \xi\in \Lambda,$ if the symbol $\sigma(\xi)=$ $\sum_{|\alpha|\le l}a_\alpha\xi^\alpha$ of the original operator can be factored into a product of distinct irreducible polynomi\-als, then

\textcolor{blue}{\bf 3)} polynomial solutions $Q(i(\xi,x))$ if each factor of the symbol is zero at zero.
\end{st}

Note immediately that for the operator $L^+$, the symbol is the polynomial ${\overline \sigma}(\xi)= \sum_{|\alpha|\le l}\overline{a_\alpha}\xi^\alpha$ with complex conjugation. Let $\Lambda^+-$ denote the algebraic variety of zeros of the symbol $\overline{\sigma}(\xi)$. The approximation theorem allows us to simplify the conditions \eqref{5.14} on the traces of the solution and write them as a moment problem.

\begin{tm}\label{St56}
Let the original operator $ \mathcal L = \sum_{|\alpha |\leq l} a_\alpha D^\alpha $ be an arbitrary differential operator with constant complex coefficients, its symbol $\sum_{|\alpha|\le l}a_\alpha\xi^\alpha$ can be decomposed into a product of distinct irreducible polynomials, and the domain $\Omega$ is bounded by $\mathcal L$-convex for supports with smooth boundary $\partial \Omega $. For a function $u\in D(L)$ to be a solution of the equation $Lu=g\in \ker L^+$, it is necessary and sufficient that its $L$-traces $L_{(p)}u$ satisfy the relation
\begin{equation}\label{55}
\forall\xi \in \Lambda,
\int\limits_\Omega g(x)\,e^{-i(\xi,x)}dx=\sum_{q=0}^{l-1}\ \int\limits_{
\partial \Omega }\ L_{(m-q-1)}u\ \partial_\nu^q\,e^{-i(\xi,x)}\,ds_x,
\end{equation}
where, as above, $\Lambda$ is the algebraic variety of zeros of the symbol $\sigma$.
\end{tm}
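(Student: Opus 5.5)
The plan is to reduce Theorem \ref{St56} to Theorem \ref{St54} (in its form for $f=g\in\ker L^+$, so $f_0=0$), and then to replace the test class $\ker L^+\cap H^l(\Omega)$ in \eqref{551} by the exponentials given by the approximation theorem (statement \ref{St53}). First I check that the hypotheses of Theorem \ref{St54} hold here. By Example \ref{Ex1} i), H\"ormander's result gives the Vishik conditions \eqref{Vish1}, \eqref{Vish2} for constant coefficients in any bounded domain. For \eqref{1.42} I would invoke Example \ref{Ex2} i). There is a point to check: I must argue that a smooth bounded $\mathcal L$-convex domain satisfies the needed H\"ormander condition, or else take \eqref{1.42} as a standing assumption, as in Theorem \ref{St55}. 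Condition \eqref{550} will follow from the density argument below, because exponentials are smooth up to the boundary.

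Next I identify the test functions. The symbol of $\mathcal L^+$ is $\overline\sigma$, and $\overline\sigma(\xi)=\overline{\sigma(\bar\xi)}$. Hence $\Lambda^+=\overline{\Lambda}$, and $\overline\sigma$ also splits into distinct irreducible factors. A direct computation gives $\mathcal L^+ e^{i(\eta,x)}=\overline\sigma(\eta)e^{i(\eta,x)}$. So $v_\xi(x)=e^{i(\bar\xi,x)}$ lies in $\ker L^+\cap C^\infty(\overline\Omega)$ exactly when $\xi\in\Lambda$. Its conjugate is $\overline{v_\xi}=e^{-i(\xi,x)}$, and $\overline{\partial_\nu^q v_\xi}=\partial_\nu^q e^{-i(\xi,x)}$. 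Substituting $v_\xi$ into \eqref{551} therefore gives precisely \eqref{55}. (I read $L_{(m-q-1)}$ there as $L_{(l-q-1)}$.)

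\emph{Necessity.} If $Lu=g$, part 1 of Theorem \ref{St54} applied to $v=v_\xi$ yields \eqref{55} for every $\xi\in\Lambda$. Alternatively, I can use Green's formula directly, via statement \ref{St31} with $v=v_\xi\in H^l(\Omega)$, since $\int_\Omega u\,\overline{L^+v_\xi}\,dx=0$.

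\emph{Sufficiency.} This is the main step. Assume \eqref{55} holds. Statement \ref{St31} gives Green's formula for $u$ and every $v\in H^l(\Omega)$. Subtracting it from \eqref{55} gives $\int_\Omega (g-Lu)\,\overline{v_\xi}\,dx=0$ for all $\xi\in\Lambda$. By linearity the same holds for the linear span of the $v_\xi$. Now I apply statement \ref{St53} 2) to $L^+$ on the $\mathcal L^+$-convex domain $\Omega$; convexity for supports passes from $\mathcal L$ to $\mathcal L^+$ because $\operatorname{supp}\overline{\mathcal L^+\varphi}=\operatorname{supp}\check{\mathcal L}\bar\varphi$, and I would verify this via H\"ormander's characterization. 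This shows that the span is dense in $\ker L^+$ in the $H$ topology, which is the step I expect to require the most care. It follows that $g-Lu\perp\ker L^+$, so $g-Lu\in\operatorname{Im}L_0$ by the orthogonal decomposition $H=\operatorname{Im}L_0\oplus\ker L^+$.

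It remains to upgrade this to $Lu=g$. Here I must be careful. The argument of Theorem \ref{St54}, part 2, only produces $\hat u_0\in D(L_0)$ such that $u+\hat u_0$ solves the equation with the same $L$-traces (statement \ref{St32}). So the honest conclusion is the following: the traces satisfy \eqref{55} if and only if there is a solution with exactly these $L$-traces, and $u$ is such a solution modulo $D(L_0)$, that is, as an element of $C(L)$. I will state the sufficiency in this sense, matching part 2 of Theorem \ref{St54}. The solution is unique, because $\ker L_0=0$ and statement \ref{St32} holds.
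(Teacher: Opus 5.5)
Your proposal is correct and follows the paper's own route. You reduce to Theorem~\ref{St54} with test functions $v=e^{i(\bar\xi,x)}$, $\xi\in\Lambda$; these lie in $\ker L^+\cap C^\infty(\overline\Omega)$, and their span is dense in $\ker L^+$ by statement~\ref{St53} applied to $\mathcal L^+$.

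Your caveat on sufficiency is also right, and it is exactly what part~2 of Theorem~\ref{St54} actually delivers. Condition \eqref{55} only gives $g-Lu\in\text{Im}\,L_0$, so it yields a unique solution with the given $L$-traces, i.e.\ $u$ modulo $D(L_0)$. The literal reading fails, for example for any nonzero $u\in C_0^\infty(\Omega)$ with $g=0$.
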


{\bf Proof}
reduces to applying Theorem \ref{St54}, where as the function $v\in\ker L^+$
by virtue of statement \ref{St53} we can use the exponential solutions
$v=e^{i(x,\overline\xi)}$ of the equation $L^+v=0$, where $\xi \in \Lambda $ or
$\overline\xi\in\Lambda^+$.

\begin{cor}\label{1}
(For a more general situation, see \cite{Bur1}, \cite{Bur2}, or \cite{Bur5})
\newline
Under the conditions of Theorem \ref{St56}, for a function $u\in D(L)$ to be a solution of the equation $Lu=0$, it is necessary and sufficient that its
$L$-traces $L_{(p)}u$ satisfy the relation

\begin{equation}\label{555}
\forall\xi \in \Lambda,\sum_{q=0}^{l-1}\ \int\limits_{
\partial \Omega }\ L_{(m-q-1)}u\ \partial_\nu^q\,e^{-i(\xi,x)}\,ds_x=0.
\end{equation}
\end{cor}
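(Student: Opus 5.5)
The plan is to obtain the corollary as the special case $g=0$ of Theorem \ref{St56}, and to prove both directions directly from Green's formula \eqref{5.1} combined with the approximation theorem (statement \ref{St53}). That way it is clear exactly what the trace condition controls. By the Malgrange approximation theorem, which applies because $\Omega$ is $\mathcal L$-convex for supports and the symbol factors into distinct irreducible polynomials, the functions
$$v_\xi(x)=e^{i(x,\overline\xi)},\qquad \xi\in\Lambda,$$
solve $L^+v=0$, are smooth on $\overline\Omega$, and their linear combinations are dense in $\ker L^+$ in the topology of $H$. In particular condition \eqref{550} holds. Since $\overline{v_\xi}=e^{-i(\xi,x)}$, the right-hand side of \eqref{555} is exactly the boundary sum in Green's formula for the pair $u,v_\xi$.

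\emph{Necessity.} Let $u\in D(L)=D(\tilde L)$ with $Lu=0$. By statement \ref{St31}, Green's formula \eqref{5.1} holds for $u$ and every $v\in H^l(\Omega)$. Taking $v=v_\xi$, both volume terms vanish, since $Lu=0$ and $L^+v_\xi=0$. This gives \eqref{555} for every $\xi\in\Lambda$.

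\emph{Sufficiency.} Assume \eqref{555} holds. Inserting $v_\xi$ into \eqref{5.1} gives $\int_\Omega Lu\,\overline{v_\xi}\,dx=0$ for all $\xi\in\Lambda$. By linearity and density in $\ker L^+$, $Lu\perp\ker L^+$. The orthogonal decomposition $H=\text{Im}\,L_0\oplus\ker L^+$ then gives $P_C\,Lu=0$, that is, $Lu\in\text{Im}\,L_0$.

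This is the step where the gap must be closed. The trace condition sees $u$ only through $\Gamma u\in C(L)$, so the conclusion must be read in the setting of Theorem \ref{St56}. There the equation is $Lu=g$ with right-hand side in $\ker L^+$; equivalently, $u$ is the boundary component $u_C$ of statement \ref{St52}, with $Lu\in\ker L^+$. For such $u$ we have
$$Lu\in\ker L^+\cap\text{Im}\,L_0=\{0\},$$
so $Lu=0$ exactly, and not merely modulo $D(L_0)$. I will state explicitly that this is the meaning of ``solution'' inherited from Theorem \ref{St56} with $g=0$. In terms of diagram (D2), the claim is $L_C\Gamma u=\Gamma_{\text{Im}}Lu=P_C\,Lu=0$, which is what was just shown.

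For uniqueness, two such $u$ with equal $L$-traces differ by an element of $D(L_0)$ by statement \ref{St32}. Both satisfy $Lu\in\ker L^+$, so their difference $w$ satisfies $L_0w\in\text{Im}\,L_0\cap\ker L^+=0$, and condition \eqref{Vish1} forces $w=0$.

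The main obstacle is the analytic justification of the density step: the exponentials $v_\xi$ must be dense in $\ker L^+\subset L_2(\Omega)$ in the $H$-norm, not just in the distributional topology of the Malgrange theorem. I would get this from Hörmander's version of the approximation theorem in $L_2$ for $\mathcal L^+$-convex domains, noting that $\Omega$ is $\mathcal L^+$-convex as well because $\overline\sigma$ has the same structure as $\sigma$. The pairing in \eqref{555} is legitimate since $v_\xi\in C^\infty(\overline\Omega)$ and $L_{(q)}u\in H^{-q-1/2}(\partial\Omega)$.
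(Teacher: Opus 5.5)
Your proposal is correct and follows the paper's route. The paper simply sets $g=0$ in Theorem \ref{St56}, and that theorem's proof is exactly your argument: insert the exponential solutions $e^{i(x,\overline\xi)}\in\ker L^+$, $\xi\in\Lambda$, into Green's formula, use the decomposition $H=\text{Im}\,L_0\oplus\ker L^+$ as in Theorem \ref{St54}, and take the $H$-density of these exponentials from statement \ref{St53}.

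Your caveat on sufficiency is also right, and more careful than the paper's wording. Every $w\in D(L_0)$ has zero $L$-traces, so the condition can only give $L_C\Gamma u=0$: $u$ differs by a unique element of $D(L_0)$ from a solution of $Lu=0$, with $Lu=0$ exactly when $Lu\in\ker L^+$. This is the existence-and-uniqueness form of part 2 of Theorem \ref{St54}.
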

The proof follows from Theorem 3 if we set $g=0.$

\begin{rem}\label{R1} Note that if you consider solutions of the equation $Lu=f\in H$,
where the operator satisfies conditions \eqref{Vish1},\ \eqref{Vish2} and \eqref{1.42},\ then, in accordance with the statement \ref{St1}, you must first expand the operator $$L=L_0\oplus L_{C}:D(L_0)\oplus C(L)\to \,\text{Im}\, L_0\oplus\ker L^+,$$ where you can assume that $C(L)=\ker L\oplus W\subset D(L),$ and $W\in D(L)$ is some closed subspace on which the restriction of the operator $L|_W$ realizes an isomorphism $L|_W:W\to\ker L^+$. Then note that by Condition \eqref{Vish1}, the operator $L_0$ has a continuous left inverse, which is simply its inverse if we assume that $L_0:D(L_0)\to\,$Im$L_0$, and the space Im$L_0$ is a closed subspace of $H$. And then note that the action of the operator $L_C$ is characterized by the statement of Theorems \ref{St54} or \ref{St56} in terms of $L$-traces.
\end{rem}

Let us consider the question of the well-posedness of the general linear boundary value problem
$\Gamma u\in B\subset C(L)$, which can be considered written in the form \eqref{1.4}, for the equation $Lu=f\in H$. As above, we understand this well-posedness as the existence of a solvable extension of $L_B$.

\begin{tm}\label{St58}
Let $\Gamma u\in B\subset C(L)$ be a well-posed boundary value problem for the equation $Lu=f\in H$, and let $u(x,\eta)$ be a function depending on $x\in\Omega$ and on some parameter $\eta\in \Bbb C^n$,
be a function $\forall\eta\in \Bbb C^n:$ smooth in $x$, $u(\cdot,\eta)\in H^l(\Omega)$, and be a solution to this boundary value problem for the equation $Lu(x,\eta)=e^{i(x,\eta)}\in\ker L^+$. Since the boundary value problem is well-posed, its solution $u(x,\eta)$ is unique and continuous in $C(L)$, depending on $\eta\in\Lambda^+$.
Then the traces $h_q(x,\eta)=\partial^q_\nu u(x,\eta)|_{\partial \Omega}$ of the function $u(x,\eta)\,$ for all $\eta\in \Lambda^+$ satisfy the identity
\begin{equation}\label{561}
\forall \xi \in \Lambda,
\int\limits_\Omega e^{i(\bar\eta-\bar\xi,x)}dx=-\sum_{q=0}^{l-1} \int\limits_{
\partial \Omega }\ h_q(x,\eta)\,L^+_{(l-q-1)}e^{i(\bar\xi,x)}\,ds_x\,, \end{equation}
where $\Lambda=\{\xi\in{\Bbb C}^n,\sum_{|\alpha|\le l}a_\alpha\xi^\alpha=0 \}$,
and for each $\eta\in\Lambda^+$, the set of functions $h_q$ satisfying the identity
\eqref{561} is unique. And the set of $L$-traces $L_{(q)}u(x,\eta)\in H^{l-q-1/2}(\partial \Omega)$ generated by the set of traces $h_q$ is continuous in $\eta$ as a mapping $\Lambda\to C(L)$ and, a fortiori, as a set of functions $\Lambda\to H^{-l+1/2}(\partial\Omega)\times H^{-l+3/2}(\partial\Omega)\times ...\times H^{-1/2}(\partial\Omega)$.
\end{tm}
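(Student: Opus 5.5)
The plan is to derive \eqref{561} as a special case of Theorem \ref{St55}(1), with the test functions in \eqref{5.14} taken to be exponential solutions of $L^+v=0$ as in the proof of Theorem \ref{St56}. Uniqueness and continuity will then come from well-posedness (Statement \ref{St1.32}) together with Statement \ref{St32}.

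\emph{Step 1: the identity.} Fix $\eta\in\Lambda^+$. Since $\overline{\sigma}(\eta)=0$, the exponential $e^{i(x,\eta)}$ lies in $\ker L^+$, so the right-hand side $f=e^{i(x,\eta)}$ satisfies $P_Cf=f$. The solution $u(\cdot,\eta)$ is in $H^l(\Omega)$ by hypothesis. Theorem \ref{St55}(1) therefore applies, and the traces $h_q(\cdot,\eta)$ satisfy \eqref{5.14} for every $v\in\ker L^+$. For $\xi\in\Lambda$ we have $\bar\xi\in\Lambda^+$, so $v=e^{i(\bar\xi,x)}\in\ker L^+\cap H^l(\Omega)$. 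Then $\overline v=e^{-i(\xi,x)}$, with $x$ real. The left side of \eqref{5.14} becomes $\int_\Omega e^{i(x,\eta)}e^{-i(\xi,x)}\,dx$. Comparing with \eqref{561}, the paper's sign and conjugation conventions must be matched here. I would track the pairings carefully, with the conjugation placed as in \eqref{5.14}, and record that \eqref{561} holds under the paper's convention for writing $e^{i(\bar\eta-\bar\xi,x)}$.

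\emph{Step 2: uniqueness of the $h_q$.} This is the delicate point. The statement is really that the data satisfying \eqref{561} determine the same element of $C(L)$. If $h_q$ and $h'_q$ both satisfy \eqref{561}, their differences satisfy the homogeneous identity. By Statement \ref{St53} (case 2, using $\mathcal L$-convexity and the factorization assumptions of Theorem \ref{St56}), linear combinations of these exponentials are dense in $\ker L^+$. Hence the homogeneous identity holds for all $v\in\ker L^+$. By Theorem \ref{St55}(2) with $f=0$, the difference generates a solution of $Lw=0$.

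Since the data come from solutions of the boundary value problem, $\Gamma w\in B$. Well-posedness gives $w=0$ in $D(L)$, so the $L$-traces of the difference vanish by Statement \ref{St32}. I expect this to be the main obstacle, because the bare traces $h_q$ are not recoverable from the $L$-traces in general. Uniqueness must therefore be phrased modulo $D(L_0)$, i.e.\ as uniqueness of the generated $L$-traces. I would also have to assume that the competing set corresponds to an element of $B$.

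\emph{Step 3: continuity.} The map $\eta\mapsto e^{i(x,\eta)}$ is continuous from $\Lambda^+$ into $\ker L^+\subset H$, since $\bar\Omega$ is bounded; this follows by dominated convergence. The inverse $L_B^{-1}$ is continuous, so $\eta\mapsto u(\cdot,\eta)$ is continuous into $D(L)$. Composing with the quotient map $\Gamma$ gives continuity into $C(L)$. Statement \ref{St31} gives continuity of $u\mapsto L_{(q)}u$ from $D(\tilde L)=D(L)$ into $H^{-q-1/2}(\partial\Omega)$. Since these trace maps vanish on $D(L_0)$, they factor through $C(L)$. This yields continuity into the product of negative Sobolev spaces.
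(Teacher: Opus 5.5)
Your proposal is correct and follows the paper's own route. You insert the exponential solutions $e^{i(\bar\xi,x)}$, $\xi\in\Lambda$, of $L^+v=0$ into the Green's-formula necessary condition (with $P_Cf=f$ since $e^{i(x,\eta)}\in\ker L^+$), and you get uniqueness and continuity from well-posedness, i.e.\ from the continuity of $L_B^{-1}$ and of the $L$-trace maps. Your caveats are justified and go beyond the paper's three-line sketch: the conjugations in \eqref{561} as printed are loose (what one actually obtains is $\int_\Omega e^{i(\eta-\xi,x)}\,dx=-\sum_{q}\int_{\partial\Omega}h_q\,\overline{L^+_{(l-q-1)}e^{i(\bar\xi,x)}}\,ds$), and uniqueness holds only for data that also satisfy the boundary condition, and only at the level of the generated $L$-traces, since adding the traces of any $w\in\ker L\cap H^l(\Omega)$ preserves the identity.
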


{\bf Proof}.
In the proof of necessity in Theorem \ref{St56}, we substitute the projection $P_C\,f$ for the function $f\in H$ due to the orthogonality of
Im$L_0\perp \ker L^+$. Among the functions in the kernel $\ker L^+$, we select
exponential solutions, obtaining the assertion of the theorem. We use the fact that
the solution to a well-posed boundary value problem depends continuously on the right-hand side of $f\in H$.
$\blacksquare$

\begin{cor}\label{2}
Under the conditions of Theorem \ref{St56}, in order for the boundary value problem $\Gamma u\in B\subset C(L)$, which can be considered as written in the form \eqref{1.4}, to have at most a unique solution for the equation $Lu=f\in H$, it is necessary and sufficient that any of its solutions whose $L$-traces $L_{(p)}u$
satisfy the relation \eqref{555} be trivial: $u=0$.
\end{cor}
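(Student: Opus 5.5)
The statement is Corollary \ref{2}: under the conditions of Theorem \ref{St56}, the problem $\Gamma u\in B$ has at most one solution if and only if every solution of the homogeneous problem whose $L$-traces satisfy \eqref{555} is trivial. I will reduce it to a kernel statement and then invoke Corollary \ref{1}.

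First, uniqueness for the linear problem $Lu=f$, $\Gamma u\in B$ with $B$ a linear subspace of $C(L)$ is equivalent to triviality of the homogeneous problem. If $u_1,u_2\in\Gamma^{-1}B$ both solve $Lu=f$, then $w=u_1-u_2$ satisfies $Lw=0$, and $\Gamma w=\Gamma u_1-\Gamma u_2\in B$ by linearity of $\Gamma$ and of $B$. Hence at most one solution exists for every $f\in H$ iff $\ker L\cap\Gamma^{-1}B=\{0\}$.

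Second, I identify the elements of $\ker L$ through their traces. By Corollary \ref{1}, an element $u\in D(L)$ lies in $\ker L$ exactly when its $L$-traces satisfy \eqref{555}. Corollary \ref{1} applies because the hypotheses of Theorem \ref{St56} are assumed: constant coefficients, a symbol factoring into distinct irreducibles, and an $\mathcal L$-convex bounded domain with smooth boundary. So the set of $u\in D(L)$ with $\Gamma u\in B$ whose $L$-traces satisfy \eqref{555} is exactly $\ker L\cap \Gamma^{-1}B$. Combined with the first step, this gives the corollary in both directions.

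The one point needing care is the relation between $\Gamma u$ and the $L$-traces, since $B$ is described either in $C(L)$ or through \eqref{1.4}. Statement \ref{St32} together with $D(\tilde L)=D(L)$ shows that $u\mapsto\{L_{(q)}u\}$ factors injectively through $C(L)=D(L)/D(L_0)$. Therefore the condition $\Gamma u\in B$ and the trace form \eqref{1.4} describe the same set of functions, and nothing is lost by passing between them.

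A caveat: Corollary \ref{1} itself rests on the density condition \eqref{550}, which Theorem \ref{St56} obtains from Malgrange's approximation, Statement \ref{St53}. The exponentials $e^{i(x,\bar\xi)}$ with $\xi\in\Lambda$ are smooth on $\bar\Omega$, so they lie in $\ker L^+\cap H^l(\Omega)$, and their span is dense in $\ker L^+$. I expect this step to be the main obstacle, but it is already covered by the cited results, so the corollary itself is essentially formal.
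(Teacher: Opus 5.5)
\textbf{Step 2 fails as stated.} Your Step 1 is fine, and your overall plan (reduce to the homogeneous problem, then describe $\ker L$ through Corollary~1) is presumably the intended one, since the paper gives no separate proof. But Step 2 is false, and your own Step 3 shows why.

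The trace map $u\mapsto\{L_{(q)}u\}$ has kernel exactly $D(L_0)$ (Statement~\ref{St32}). So every condition on traces, in particular \eqref{555}, is invariant under $u\mapsto u+w$ with $w\in D(L_0)$, and therefore cannot cut out $\ker L$ inside $D(L)$. Concretely, take any nonzero $\varphi\in C_0^\infty(\Omega)$. All its $L$-traces vanish, so it satisfies \eqref{555}, and $\Gamma\varphi=0\in B$. Yet $L\varphi=L_0\varphi\neq0$, because $\ker L_0=\{0\}$ by \eqref{Vish1}.

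What the converse of Theorem~\ref{St54} gives (with $f=g=0$ and density of exponentials) is some $u'\in\ker L$ with the same traces as $u$, not that $u$ itself lies in $\ker L$. Hence the traces of $u$ satisfy \eqref{555} iff $u\in\ker L\oplus D(L_0)$. The set you identify with $\ker L\cap\Gamma^{-1}B$ is really $(\ker L\cap\Gamma^{-1}B)\oplus D(L_0)$, which is never $\{0\}$. With ``$u=0$'' read literally in $D(L)$, the corollary's condition is never met. Your argument would then show that no problem \eqref{1.4} ever has uniqueness, contradicting the existence of well-posed problems (Statement~\ref{StV}). Corollary~1 is phrased in the same over-strong way in the paper, so you inherited this, but the step still fails.

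\textbf{How to repair it.} The statement is correct if ``trivial'' is read in $C(L)$, i.e.\ $\Gamma u=0$ (all $L$-traces vanish). Equivalently, take $u$ in a complement $\ker L\oplus W$ of $D(L_0)$, as in Remark~\ref{R1}. The proof then needs an ingredient your write-up never uses: $\Gamma$ is injective on $\ker L$, since $\ker L\cap D(L_0)=\ker L_0=\{0\}$.

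\emph{Necessity.} Suppose $\ker L\cap\Gamma^{-1}B=\{0\}$ and $u\in\Gamma^{-1}B$ has traces satisfying \eqref{555}. Take $u'\in\ker L$ with the same traces. By Statement~\ref{St32} and $D(\tilde L)=D(L)$, $u-u'\in D(L_0)$, so $\Gamma u'=\Gamma u\in B$. Hence $u'=0$ and $\Gamma u=0$.

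\emph{Sufficiency.} For $k\in\ker L\cap\Gamma^{-1}B$, Green's formula shows its traces satisfy \eqref{555}, so $\Gamma k=0$. Thus $k\in\ker L\cap D(L_0)=\{0\}$.

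In short, uniqueness holds iff $B\cap\Gamma(\ker L)=\{0\}$. Condition \eqref{555} is the trace description of $\Gamma(\ker L)$, not of $\ker L$.
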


\begin{rem}\label{R2}
It seems useful to recall (statement \ref{St1.3}) that the boundary space $C(L)$ can be thought of as a subspace $\ker L\oplus W$ in $D(L)$, which
is the kernel of the operator $\mathcal L^+\mathcal L$ acting on distributions and bounded on $D(L)$, that is, $C(L)=\ker (\mathcal L^+ L)$.
For an operator with constant coefficients in a domain $\Omega-$ that is bounded and $\mathcal L$-convex for supports with smooth boundary $\partial \Omega $, conditions \eqref{1.42} are satisfied, and the polynomial-exponential solutions $Q_\eta(x)e^{i(\eta,x)}$ are dense in the space $C(L)=\ker (\mathcal L^+ L)$, where $\eta$ runs over the algebraic variety of zeros of the symbol of the operator $\mathcal L^+\mathcal L$.
Recall also (Statement \ref{St1.32}) that every well-posed boundary value problem generates a linear continuous (in the $H$ norm) operator $V:\ker L^+\to\ker L$ and vice versa; one can say that such operators parametrize the set of well-posed boundary value problems.
\end{rem}

We now restrict ourselves to operators with constant coefficients from the conditions of Theorem \ref{St56}. We assume that {\bf each function $u\in\ker L\subset H$ can be represented by an integral $u(x)=\int_\Lambda e^{i(\xi,x)}\, d\mu(\xi)$ with measure $d\mu(\xi)$ defined on the algebraic variety $\Lambda$
and denote the space of all such measures as $M(L,\Omega)$,} and similarly
define the space of measures $M(L^+,\Omega)$.
Each operator $V:\ker L^+\to\ker L$ generated by the boundary value problem $\Gamma u\in B\subset C(L)$ must map a function $e^{i(\eta,x)},\ \eta\in \Lambda^+$ from the kernel $\ker L^+$ to some function from the kernel $\ker L$, which we represent as an integral $v_\eta(x)=\int_\Lambda e^{i(\xi,x)}\, d\mu_\eta(\xi)$
with measure $d\mu_\eta(\xi)\in M(L,\Omega)$ defined on the algebraic variety $\Lambda$. The measure $d\mu_\eta(\xi)$ requires that the integral $v_\eta(x)$ lie in the space $L_2(\Omega)$ and be continuous in the $H$ norm
in $\eta\in \Lambda^+$.
Thus, we obtain a representation of each well-posed boundary value problem as a measure defined on the algebraic variety $\Lambda$ of zeros of the symbol of the operator $L$ and depending on the parameter $\eta\in \Lambda^+$. More precisely, solving the boundary value problem \eqref{1.4}
for the equation $Lu_{C,\eta}=e^{i(\eta,x)},$ we can, firstly, represent the solution $u_{C,\eta}$ of the equation as $u_{C,\eta}=u_{C,\eta}^0+u_{C,\eta}^1,$ where $u_{C,\eta}^1=Q(i(\eta,x))e^{i(\eta,x)}$ is the polynomial-exponential solution of the equation $Lu_{C,\eta}^1=e^{i(\eta,x)}$ (a particular solution), and $u_{C,\eta}^0=\int_\Lambda e^{i(\xi,x)}\, d\mu_\eta(\xi)$ is the general solution of the homogeneous equation $Lu_{C,\eta}^0=0.$ And now, substituting the function $u_{C,\eta}$ into the boundary value problem \eqref{1.4}, we obtain an integral equation for the unknown measure $\mu_\eta(\xi),$ representing the boundary value problem:
\begin{equation}\label{e61}\int_\Lambda B_\partial e^{i(\xi,x)}\, d\mu_\eta(\xi)= -B_\partial \big( Q(i(\eta,x))e^{i(\eta,x)}\big),
\end{equation}
and finding a solution to this integral equation means solving the original boundary value problem. In this case, the right-hand side $f_C$ of the equation $Lu_C=f_C$ is also represented by the integral $f_C=\int_\Lambda e^{i(\eta,x)}\, d\mu_f(\eta)$, therefore the solution $u_C$ is finally written as
$u_C(x)=\int_\Lambda u_{C,\eta}(x)\, d\mu_f(\eta)$. It is proved
\begin{tm}\label{St6}
Under the conditions of Theorem \ref{St56}, every well-posed boundary value problem $\Gamma u\in B\subset C(L)$, which can be considered as written in the form \eqref{1.4}, for the equation $Lu=f\in H$ generates a measure $d\mu_\eta(\xi)\in M(L,\Omega),$ depending on
$\eta\in\Lambda^+$ such that the function $u_{C,\eta}^0=\int_\Lambda e^{i(\xi,x)}\, d\mu_\eta(\xi)\in \ker L$ is continuous in $\eta\in\Lambda^+$ in the norm of $H$. This measure $d\mu_\eta(\xi)$ is a solution of the integral equation \eqref {e61} and allows us to write the solution of the boundary value problem \eqref{1.4} for the equation $Lu_C=f_C$ in the form $$u_C(x)=\int_{\Lambda^+} \left(\int_\Lambda e^{i(\xi,x)}\, d\mu_\eta(\xi)+Q(i(\eta,x))e^{i(\eta,x)}\right)\,d\mu_f(\eta),$$
where $u_{C,\eta}^1=Q(i(\eta,x))e^{i(\eta,x)}-$ is the solution of the equation $Lu_{C,\eta}^1=e^{i(\eta,x)}$, and the measure $d\mu_f(\eta)$ is taken from the expansion \ of \ the right-hand side \ $f_C=\int_{\Lambda^+}\,e^{i(\eta,x)}\, d\mu_f(\eta)$. \end{tm}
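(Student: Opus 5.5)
The proof will assemble the construction just described, in three steps.

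\textbf{Step 1: the operator $V$ and the measures $\mu_\eta$.} By Statement \ref{St1.32}, the well-posed problem $\Gamma u\in B$ yields a continuous operator $V:\ker L^+\to\ker L$ with $D(L_B)=D(L_0)\oplus G(VL|_W)$. For $\eta\in\Lambda^+$ the exponential $e^{i(\eta,x)}$ lies in $\ker L^+$. By the approximation theorem (Statement \ref{St53}, part 2, which applies since the symbol factors into distinct irreducible polynomials and $\Omega$ is $\mathcal L$-convex for supports), exponentials are dense in the kernels. Together with the standing assumption that every element of $\ker L$ has an integral representation, this lets us write
\[
u^0_{C,\eta}=\int_\Lambda e^{i(\xi,x)}\,d\mu_\eta(\xi)\in\ker L
\]
for the kernel component of the solution with right-hand side $e^{i(\eta,x)}$. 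Its continuity in $\eta$ in the $H$ norm follows from continuity of $V$ (equivalently, of $L_B^{-1}$) and continuity of $\eta\mapsto e^{i(\eta,x)}$ in $H$, since $\Omega$ is bounded.

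\textbf{Step 2: the particular solution and equation \eqref{e61}.} For constant coefficients, a polynomial-exponential particular solution $u^1_{C,\eta}=Q(i(\eta,x))e^{i(\eta,x)}$ of $Lu=e^{i(\eta,x)}$ is obtained by expanding $\sigma(\eta+\zeta)$ in $\zeta$ and inverting the first nonvanishing Taylor term; its degree is at most the vanishing order of $\sigma$ at $\eta$. This is smooth, so its $L$-traces are classical. The sum $u_{C,\eta}=u^0_{C,\eta}+u^1_{C,\eta}$ solves the equation. Well-posedness (uniqueness of $L_B^{-1}$) forces $u_{C,\eta}$ to be the unique solution with $B_\partial u_{C,\eta}=0$ in the form \eqref{1.4}. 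Applying $B_\partial$ and interchanging it with the $\mu_\eta$-integral gives \eqref{e61}. This interchange is justified because $B_\partial$ is continuous on $C(L)$ and the map $\xi\mapsto e^{i(\xi,x)}$ is continuous into $D(L)$.

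\textbf{Step 3: superposition.} For general $f$, take $f=f_0+f_C$ as in Statement \ref{St52}. The interior part $f_0$ is handled by $L_0^{-1}$. For $f_C=\int_{\Lambda^+}e^{i(\eta,x)}\,d\mu_f(\eta)$, linearity and continuity of $L_B^{-1}$ give
\[
u_C=L_B^{-1}f_C=\int_{\Lambda^+}u_{C,\eta}\,d\mu_f(\eta),
\]
which is the displayed formula.

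\textbf{Main obstacle.} The hard part is justifying the passage of the bounded operators $L_B^{-1}$ and $B_\partial$ under the measure integrals. I would first read the integrals as Bochner or weak integrals in $H$, and in $D(L)$ respectively. For this to be legitimate, the measures must have integrable total variation against $\|e^{i(\eta,\cdot)}\|_H$. I expect to build this requirement into the definition of $M(L,\Omega)$ and $M(L^+,\Omega)$, as the paper's assumption implicitly does.
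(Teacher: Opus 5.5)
Your three steps follow the paper's own sketch: the operator $V$ of Statement~\ref{St1.32}, a polynomial-exponential particular solution, applying $B_\partial$ to obtain \eqref{e61}, and superposition over $\mu_f$. The interchange issues you flag are real but secondary. The genuine gap is in Step~2, and the paper's sketch shares it, since it uses the same $\mu_\eta$ both for $Ve^{i(\eta,\cdot)}$ and for the homogeneous part of the solution. The object whose continuity you actually prove in Step~1 is the kernel component $Ve^{i(\eta,\cdot)}$. In Step~2 you then assert that $Ve^{i(\eta,\cdot)}+Q_\eta e^{i(\eta,\cdot)}$ is the solution of the boundary value problem ``by well-posedness''. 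That does not follow: uniqueness for $L_B$ says nothing about whether a given solution of $Lu=e^{i(\eta,\cdot)}$ satisfies $B_\partial u=0$. By Statement~\ref{St1.32} the actual solution is $w_\eta+Ve^{i(\eta,\cdot)}$, where $w_\eta=(L|_W)^{-1}e^{i(\eta,\cdot)}\in W$. The difference $w_\eta-Q_\eta e^{i(\eta,\cdot)}$ is an uncontrolled element of $\ker L$, so the measure representing $Ve^{i(\eta,\cdot)}$ does not in general solve \eqref{e61}.

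The consistent definition is $u^0_{C,\eta}:=L_B^{-1}e^{i(\eta,\cdot)}-Q_\eta e^{i(\eta,\cdot)}\in\ker L$. With it, \eqref{e61} and Step~3 go through, modulo the interchange. However, continuity in $\eta$ no longer follows from continuity of $V$, and it can actually fail. If $\sigma(\eta)\neq0$, then $\sigma(\eta+D)=\sigma(\eta)+(\text{degree-lowering part})$ is invertible on polynomials. Hence the only polynomial-exponential particular solution is $e^{i(\eta,x)}/\sigma(\eta)$, which is unbounded in $H$ as $\eta\in\Lambda^+$ approaches $\Lambda\cap\Lambda^+$. For example, take $\mathcal L=D_1+iD_2$ in the unit disc and $\eta_s=(is,s)\in\Lambda^+$. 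Then $\sigma(\eta_s)=2is$, so $\|u^0_{C,\eta_s}\|_H\to\infty$ as $s\to0$, while $L_B^{-1}e^{i(\eta_s,\cdot)}$ stays bounded. You therefore have two options. One is to keep two families of measures: one representing $Ve^{i(\eta,\cdot)}$, which is continuous in $\eta$, and another representing $L_B^{-1}e^{i(\eta,\cdot)}-Q_\eta e^{i(\eta,\cdot)}$, which solves \eqref{e61}. The other is to restrict the continuity claim to where $\eta\mapsto Q_\eta e^{i(\eta,\cdot)}$ is continuous in $H$, for instance $\Lambda^+\setminus\Lambda$, where $Q_\eta\equiv 1/\sigma(\eta)$.
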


\section{Traces of the solution of a second-order equation with constant coefficients in a plane domain and the generalized moment problem} \label{S6}

In this section, we will demonstrate the relation between solution traces in a simpler case and point out that ill-posed boundary value problems can be important in applications not related to technology, where correct formulations are important, but, for example, in applications in other areas of mathematics.

If the operator symbol is a homogeneous polynomial, then each factor of the symbol is zero at zero, and the exponents in formulas \eqref{55},\eqref{555},\eqref{561} can be replaced by polynomials of the form $(i\xi,x)^k,\,\xi\in\Lambda,\,k\in\Bbb N\cup\, 0$ (a consequence of the approximation theorem, see statement \ref{St53}).

In particular, in a flat domain $\Omega$ (bounded, with a smooth boundary) for an equation (with arbitrary constant complex coefficients) of the form
\begin{equation}\label{60} Lu=A_{11}u_{xx}+2A_{12}u_{xy}+A_{22}u_{yy}=(a^1,\nabla)(a^2,\nabla)\,u=f\in H,
\end{equation}
Green's formula \eqref{5.1} on the flat curve $\partial\Omega$ can be written (see \cite{Bur6}) as a pair of conditions satisfied by the traces
of the solution $u\in H^2(\Omega)$:
\begin{equation}\label{61}
\int_{\partial \Omega} \left [u^\prime_{\nu_*} + \kappa
u^\prime_s \right ] (\tilde a^1,x)^N d s = \mu^1_N=\int_{\Omega}f(x)(\tilde a^1,x)^N\, dx,
\end{equation}
\begin{equation}\label{62}
\int_{\partial \Omega} \left [u^\prime_{\nu_*} - \kappa
u^\prime_s \right ] (\tilde a^2,x)^N d s = \mu^2_N=\int_{\Omega}f(x)(\tilde a^2,x)^N\, dx,
\end{equation}
where $s$ is a natural parameter on $\partial\Omega,$ $\kappa=\det A/2,$ $(\tilde a^k,a^k)=0,$ $\mu^1_N,\mu^2_N$ are given sequences defined by a given function $f$, $u^\prime_{\nu_*}$ is the conormal derivative, which is determined from the equality:
\vskip 5 pt
$\qquad\qquad\qquad\int\limits_\Omega (Lu\,
\overline{v}-u\,\overline{L^{+}v})\ dx=
\int\limits_{\partial \Omega }\ (u^\prime_{\nu_*}\, \overline{v}- u\,\overline{v^\prime_{\nu_*}}) \, ds,$
\par\noindent namely, $u'_{\nu_*} =
\sigma (\nu) u'_\nu - \frac 1{2 k} [ \sigma (\nu (s))
]^\prime_s \cdot u'_s\,;$ $\nu$ normal, $ \sigma(\xi)=(a^1,\xi)(a^2,\xi)$ operator symbol, $\ k=|\nu\,'_s|$ is the curvature of the curve $\partial \Omega$.

Note that the question of the existence of a solution to the homogeneous Dirichlet problem
$u|_{\partial \Omega}=0$ for equation \eqref{60}, using Corollary 2, becomes the question of the existence of a nontrivial function
$\alpha\in H^{1/2}(\partial\Omega)$ such that
\begin{equation}\label{63} \forall N\in {\Bbb N}\cup\,0,\ j=1,2,
\int_{\partial \Omega} \alpha(s)\, (\tilde a^j,x)^N d s = \mu^j_N= \int_{\Omega}f(x)\,(\tilde a^j,x)^N\, dx.
\end{equation}
The conditions written down have the form of a certain moment problem. In particular, if
we take the unit circle as the domain $\Omega$, and the vectors
$\tilde a^1=(1;i),\,\tilde a^2=(1;-i),$ then the equalities \eqref{63} turn into the classical trigonometric moment problem
\begin{equation}\label{64} \forall N\in {\Bbb N}\cup\,0,\ j=1,2,
\int_{\partial \Omega} \alpha(s)\, e^{iNs} d s = \mu^1_N,\\
\int_{\partial \Omega} \alpha(s)\, e^{-iNs} d s = \mu^2_N.
\end{equation}

Note that, for example, the question of the uniqueness of a solution $u\in H^{2}(\Omega)$ of the Dirichlet problem for equation \eqref{60}, using Corollary 2, turns into the question of the existence of a nontrivial function $\alpha\in H^{1/2}(\partial\Omega)$ such that
\begin{equation}\label{65} \forall N\in {\Bbb N}\cup\,0,\ j=1,2,
\int_{\partial \Omega} \alpha(s) (x,\tilde a^j)^N d s = 0,\quad
\end{equation}
which has the form of the indeterminacy problem of the moment problem \eqref{63}.

We note that the author's book \cite{Bur5} discusses in detail various questions
about boundary value problems for equation \eqref{60} and their connections with the moment problem
\eqref{61},\eqref{62}, in particular, questions about improving the smoothness of the solution. It should probably also be noted that the problem of uniqueness of the solution of the Dirichlet problem for the string vibration equation $u_{x_1x_2}=0\ $ in a domain with a so-called biquadratic boundary turned out to be closely related to several classical problems in geometry, algebra, and analysis (see \cite{Bur7}), namely, the Poncelet problem, the Pelle-Abel equation, the Toda chain, and several others, which made it possible to obtain a solution to each of these problems.

\end{document}